\newtheorem{thm}{Theorem}[section]
\newtheorem{cor}[thm]{Corollary}
\theoremstyle{plain}
\newtheorem{thms}{Theorem}[section]
\newtheorem{cors}[thms]{Corollary}
\newtheorem{lems}[thms]{Lemma}
\newtheorem{props}[thms]{Proposition}
\theoremstyle{definition}
\newtheorem{defns}[thms]{Definition}
\numberwithin{equation}{section}
\theoremstyle{definition}
\numberwithin{equation}{section}
\newcommand{\EE}{\mathbb{E}}
\newcommand{\NN}{\mathbb{N}}
\newcommand{\PP}{\mathbb{P}}
\newcommand{\RR}{\mathbb{R}}
\newcommand{\dd}{\mathrm{d}}
\newcommand{\bB}{\mathcal{B}}
\newcommand{\cC}{\mathcal{C}}
\newcommand{\eE}{\mathcal{E}}
\newcommand{\lL}{\mathcal{L}}
\newcommand{\sS}{\mathcal{S}}
\newcommand{\vt}{\vartheta}
\newcommand{\al}{\alpha}
\newcommand{\e}{\varepsilon}
\newcommand{\la}{\lambda}
\newcommand{\pd}{\partial}
\newcommand{\mto}{\mapsto}
\newcommand{\ra}{\rightarrow}
\newcommand{\ti}{\tilde}
\newcommand{\vzv}{\Leftrightarrow}
\newcommand{\ds}{\displaystyle}
\newcommand{\ind}{\mathbf{1}}
\newcommand{\lqq}{\leqslant}
\newcommand{\gqq}{\geqslant}
\title{Asymptotic first exit times of the Chafee-Infante equation 
with small heavy-tailed L\'evy noise}
\author{ A. Debussche$^{1}$, Michael H\"ogele$^{2}$, Peter Imkeller$^{3}$ \\
\small$^{1}$ IRMAR - UMR 6625, ENS Cachan Bretagne, arnaud.debussche@bretagne.ens-cachan.fr\\
\small
\small$^{3}$ Universit\"at Potsdam, hoegele@math.uni-potsdam.de\\
\small
$^{3}$Humboldt-Universit\"at zu Berlin, imkeller@math.hu-berlin.de.
}
\begin{document}
\maketitle

\abstract{We study the first exit times form a reduced domain of attraction of a stable 
fixed of the Chafee-Infante equation when perturbed by a heavy tailed L\'evy noise with 
small intensity.
}

{\bf Keywords:} first exit times, L\'evy noise, stochastic Chafee-Infante equation.

{\bf{MSC:}} 60E07, 60F10, 60G51, 60H15, 60J75.

\section{Introduction}\label{chapter main results}

Energy balance models with random perturbations may provide crucial
probabilistic insight into paleoclimatological phenomena on a conceptual
level (see \cite{Ar01}, \cite{Im01}). Following the suggestion by
\cite{Dit99a} and \cite{Dit99b}, in \cite{IP08} the authors determine
asymptotic first exit times for one-dimensional heavy-tailed L\'evy
diffusions from reduced domains of attraction in the limit of small
intensity. Exponential moments not being available, the arguments leading
to these results do not employ large deviations methods, as opposed to
\cite{Go81}. \cite{IP08} shows that in contrast to the case of Gaussian
diffusions the expected first exit times are polynomial in terms of the
inverse intensity. In this article these finite dimensional results are
generalized to a class of reaction-diffusion equations, the prototype of
which is the Chafee-Infante equation.

Let $X^\e$ be the solution process of the stochastic Chafee-Infante
equation driven by $\e L$, an additive regularly varying L\'evy noise of
index $\alpha\in (0,2)$ at intensity $\e>0$. In this work we study the laws
of the asymptotic first exit times $\tau^\pm(\e)$ of $X^\e$ from a
(slightly reduced) domain of attraction of the deterministic Chafee-Infante
equation $u = X^0$ in the small noise limit $\e\to 0$. We show that there
exists a polynomial scale $\la^\pm(\e) \approx \e^\al$ linking the L\'evy
measure of $L$ and the domain of attraction of $u$, such that $\la^\pm(\e)
\tau^\pm(\e) \stackrel{d}{\ra} \bar \tau$, where $\bar \tau \sim EXP(1)$.
In particular $\EE[\tau^\pm(\e)]\approx \frac{1}{\e^\al}$ in the limit of
small $\e$.

This contrasts sharply with corresponding results in the case of Gaussian
perturbation \cite{FJL82}, where large deviations estimates in the spirit
of Freidlin and Ventsell are used to show exponential growth of first exit
times in the limit of small $\e$. Applied in a climatological context, the
Chafee-Infante equation is able to describe energy-balance based
reaction-diffusion equations, in which latitudinal heat transport is
possible, and states of the system can be seen as temperature distributions
on the interval between south and north pole. In this setting, our result
suggests a probabilistic interpretation of fast transitions between
different climate states corresponding to the stable equilibria observed in
ice core time series of temperature proxies of \cite{Cetal02}.

In the following sections we outline the partially tedious and complex
arguments needed to describe the asymptotic properties of the exit times.
Detailed proofs in particular of the more technical parts are given in the
forthcoming \cite{DHI10}.

\section{Preliminaries and the main result \label{preliminaries and notation}}

Let $H=H_0^1(0,1)$ be normed by $||u|| := |\nabla u|$ for $u\in H$,
where $|\cdot|$ is the norm in $L^2(0,1)$ and $\cC_0([0,1])$ the space of
continuous functions $u: [0,1]\ra \RR$ with $u(0) = u(1) =0$ and the supremum norm $|\cdot|_\infty$.
Since $|u| \lqq |u|_\infty \lqq ||u||$ for $u\in H$
we obtain the continuous injections $L^2(0,1) \hookrightarrow \cC_0(0,1) \hookrightarrow H$.
\noindent Denote by $M_0(H)$ the class of all Radon measures $\nu:\bB(H)\ra [0, \infty]$ satisfying
\[
\nu(A) <\infty \quad \vzv \quad A\in \bB(H), ~0 \notin \bar A.
\]
\noindent Let $(L(t))_{t\gqq 0}$ be a c\`adl\`ag version of a pure jump L\'evy process
in $H$ with a symmetric L\'evy measure $\nu\in M_0(H)$ on its
Borel $\sigma$--algebra $\bB(H)$ satisfying
\[
\int_H \min\{1, \|y\|^2\}\nu(\dd y) <\infty \qquad \mbox{ and } \qquad \nu(A) = \nu(-A), \quad A\in \bB(H), ~0 \notin \bar A,
\]
and which is regularly varying with index $\al = -\beta \in (0,2)$ and
limiting measure $\mu\in M_0(H)$. For a more comprehensive account we refer
to \cite{BGT87} and \cite{HL06}.

\noindent Fix $\pi^2 < \la \neq (\pi n)^2$ and $f(z) = -\la (z^3-z)$ for
$z\in \RR$. The object of study of this article is the behaviour of the
solution process $X^\e$ in $H$ of the following system for small $\e>0$.
For $x\in H$ consider

\begin{equation}\label{main sys}
\begin{array}{ccll}
\ds \frac{\pd}{\pd t}X^\e(t,\zeta) &=& \ds \frac{\pd^2 }{\pd \zeta^2} X^\e(t,\zeta) + f(X^\e(t,\zeta))  +
\e \dot L(t,\zeta), &  t>0,\;\zeta\in [0,1],\\[3mm]
X^\e(t,0) &=& X^\e(t,1) = 0, & t>0,\\[2mm]
X^\e(0,\zeta) &=& x(\zeta), & \zeta\in [0,1].
\end{array}
\end{equation}

\noindent We summarize some results for the solution $u(t;x) = X^0(t;x)$ of
the deterministic Chafee-Infante equation (ChI). It is well-known that the
solution flow $(t,x) \mto u(t;x)$ is continuous in $t$ and $x$ and defines
a dynamical system in $H$. Furthermore the solutions are extremely regular
for any positive time, i.e. $u(t) \in \cC^\infty(0,1)$ for $t>0$. The
attractor of (ChI) is explicitly known to be contained in the unit ball
with respect to the norm $|\cdot|_\infty$ (see for instance \cite{EFNT94},
Chapter 5.6).

\begin{props}\label{pointwise convergence}
For $\la>0$ denote by $\eE^\la$ the set of fixed points of (ChI).
Then for any $\la>0$ and initial value $x \in H$ there exists a stationary state
$\psi\in \eE^\la$ of the system (ChI) such that
\[
\lim_{t\ra\infty} u(t;x) = \psi.
\]
Furthermore if $\pi^2 < \la \neq (k\pi)^2, k\in \NN$, there are two stable
fixed points and all elements of $\eE^\la$ are hyperbolic. In addition, the
stable and the unstable manifolds of any unstable fixed point of $\eE^\la$
intersect transversally.
\end{props}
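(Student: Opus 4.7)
The plan is to exploit the well-known gradient structure of (ChI). Introducing the Lyapunov functional
\[
\Phi(u) := \int_0^1 \left[\tfrac12 |\pd_\zeta u|^2 + \la\bigl(\tfrac14 u^4 - \tfrac12 u^2\bigr)\right] \dd\zeta, \qquad u\in H,
\]
a direct computation gives $\tfrac{\dd}{\dd t}\Phi(u(t;x)) = -\int_0^1 |\pd_t u(t;x,\zeta)|^2 \dd\zeta \lqq 0$, with equality precisely on $\eE^\la$. Since (ChI) is globally dissipative with attractor contained in the $|\cdot|_\infty$ unit ball and enjoys strong parabolic smoothing (so that $u(t;x)\in \cC^\infty$ for $t>0$), the orbit $\{u(t;x):t\gqq 1\}$ is precompact in $H$. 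Hence its $\om$-limit set $\om(x)$ is nonempty, compact, connected and invariant, and LaSalle's invariance principle forces $\om(x)\subset \eE^\la$.

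To upgrade this to convergence to a single point for every $\la>0$ I would invoke the {\L}ojasiewicz--Simon inequality, applicable because the nonlinearity $f(z)=-\la(z^3-z)$ is real analytic. This yields $\int_0^\infty \|\pd_t u(t;x)\|\,\dd t<\infty$, so $u(t;x)$ has a unique cluster point $\psi\in \om(x)\subset\eE^\la$. Under the sharper assumption $\pi^2<\la\neq(k\pi)^2$ the hyperbolicity proved next makes every equilibrium isolated, so the connected set $\om(x)$ reduces to a point by elementary means, without the analytic machinery.

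For the structure of $\eE^\la$ I would analyse the boundary value problem $-\psi''=\la(\psi-\psi^3)$, $\psi(0)=\psi(1)=0$, by the standard phase-plane/shooting argument, obtaining exactly $2n+1$ solutions when $(n\pi)^2<\la<((n+1)\pi)^2$, classified by the number of interior zeros; in particular the two sign-definite equilibria $\pm\psi_0$ are the only ones without zeros in $(0,1)$. Hyperbolicity at $\psi\in\eE^\la$ amounts to $0\notin\si(A_\psi)$, where $A_\psi v:=-v''-\la(1-3\psi^2)v$ is the self-adjoint Dirichlet Sturm--Liouville operator arising from the second variation of $\Phi$; the condition $\la\neq (k\pi)^2$ is precisely the non-bifurcation condition excluding a zero eigenvalue, which is verified by a Sturm comparison with the trivial equilibrium. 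A nodal count of the negative eigenvalues then identifies $\pm\psi_0$ as the only equilibria of Morse index zero, hence as the only stable ones.

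The transversality statement is the deepest point and I expect it to be the main obstacle. I would rely on the zero-number (lap-number) method of Henry and Angenent for scalar one-dimensional parabolic equations: for any solution $v$ of the linearization along a trajectory in $W^u(\phi)\cap W^s(\psi)$, the number of sign changes $z(v(t,\cdot))$ is nonincreasing in $t$ and drops strictly at each multiple zero. Combined with the classification of equilibria by their interior zero count, this discrete Lyapunov structure is known to force every nonempty intersection $W^u(\phi)\cap W^s(\psi)$ of stable and unstable manifolds of hyperbolic equilibria to be automatically transverse, along the lines laid out in \cite{EFNT94}.
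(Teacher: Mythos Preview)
Your sketch is mathematically sound and follows the standard route; the paper, however, does not prove this proposition at all. It merely remarks that the result ``relies on the fact that there is an energy functional, which may serve as a Lyapunov function for the system'' and refers the reader to \cite{FJL82}, \cite{He83} for the convergence statement and to \cite{He85} for the hyperbolicity and transversality. Your outline is precisely a fleshed-out version of what those references contain: the gradient structure of $\Phi$, precompactness and LaSalle to get $\om(x)\subset\eE^\la$, the phase-plane classification of equilibria, Sturm--Liouville spectral analysis for hyperbolicity and Morse indices, and Henry's lap-number argument for transversality.

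Two small comments. First, the {\L}ojasiewicz--Simon step is unnecessary even for general $\la>0$: the equilibrium set of the Chafee--Infante equation is finite for every $\la$, so the connected $\om$-limit set is automatically a singleton. It is not wrong to invoke it, just heavier than needed. Second, your citation for the transversality argument is misplaced: \cite{EFNT94} concerns exponential attractors and does not contain the zero-number transversality proof. The correct reference, and the one the paper points to, is Henry \cite{He85}; Angenent's refinements came later and are not cited here.
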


\noindent This relies on the fact that there is an energy functional, which
may serve as a Lyapunov function for the system. A proof of the first part
can be found in \cite{FJL82}, \cite{He83}, and of the second part in
\cite{He85}.

\begin{defns}
For $\la > \pi^2$ the solution of system (ChI) has two stable stationary
states denoted by $\phi^+$ and $\phi^-$. The full domains of attraction are
given by
\[
D^\pm := \{x\in H~|~\lim_{t\ra\infty} u(t;x) = \phi^\pm\}, \qquad \mbox{
and }\qquad D^\pm_0 := D^\pm-\phi^\pm,
\]
and the separatrix by
\[
\sS := H\setminus \left(D^+\cup D^-\right).
\]
\end{defns}

\noindent Due to the Morse-Smale property the separatrix is a closed
$\cC^1$-manifold without boundary in $H$ of codimension $1$ separating
$D^+$ from $D^-$, and containing all unstable fixed points. For more
refined results we refer to \cite{Ra01} and references therein.

\begin{defns}
Writing $B_\delta(x)$ for the ball of radius $\delta > 0$ in $H$
with respect to the $|\cdot|_\infty$--norm centered at $x$, denote
for $\delta_1, \delta_2, \delta_3\in (0,1)$
\begin{align}
D^\pm(\delta_1) := &\{x\in D^\pm~|\cup_{t\gqq 0} B_{\delta_1}(u(t;x)) \subset D^\pm\},\nonumber \\
D^\pm(\delta_1, \delta_2) := &\{x\in D^\pm~|\cup_{t\gqq 0}
 B_{\delta_2}(u(t;x)) \subset D^\pm(\delta_1)\},\nonumber\\
D^\pm(\delta_1, \delta_2, \delta_3) := &\{x\in D^\pm~|\cup_{t\gqq
0} B_{\delta_3}(u(t;x)) \subset D^\pm(\delta_1, \delta_2)\}.\label{reduced domains}
\end{align}
For $\gamma \in (0,1)$ the sets $\ti D^\pm(\e^\gamma) :=  D^\pm(\e^\gamma,
\e^{2\gamma})$ and $D^\pm(\e^\gamma, \e^{2\gamma}, \e^{2\gamma})$ will be
of particular importance. We define the reshifted domains of attraction by
\begin{align}
D^\pm_0(\delta_1):= &D^\pm(\delta_1) - \phi^\pm,\\
D_0^\pm(\delta_1, \delta_2) :=& D^\pm(\delta_1, \delta_2) - \phi^\pm,\\
D^\pm_0(\delta_1, \delta_2, \delta_3) :=& D^\pm(\delta_1,\delta_2,
\delta_3) - \phi^\pm, \end{align} and the following neighborhoods of the
separatrix $\sS$
\begin{align*}
\ti D^0(\e^\gamma) & := H \setminus \big(\ti D^+(\e^{\gamma})\cup ~\ti D^-(\e^\gamma)\big),\\
D^*_0(\e^\gamma) & := \big(D_0^\pm(\e)\setminus D_0(\e^\gamma, \e^{2\gamma})\big) + B_{\e^{2\gamma}}(0).
\end{align*}
\end{defns}

\noindent In \cite{DHI10} it is shown that the union over all
$\e>0$ for each of the sets $D^\pm(\e^\gamma)$, $\ti D^\pm(\e^\gamma)$
and $D^\pm(\e^\gamma, \e^{2\gamma}, \e^{2\gamma})$ exhausts $D^\pm$.
Furthermore $D^\pm(\e^\gamma)$ and $\ti D^\pm(\e^\gamma)$ are positively invariant under the deterministic solution flow,
and $\ti D^\pm(\e^\gamma) + B_{\e^{2\gamma}}(0) \subset ~D^\pm(\e^\gamma)$
and $D^\pm(\e^\gamma, \e^{2\gamma}, \e^{2\gamma}) + B_{\e^{2\gamma}}(0) \subset ~\ti D^\pm(\e^\gamma)$.

\begin{props}\label{logarithmic convergence time}
Given the Chafee-Infante parameter $\pi^2 < \la \neq (k\pi)^2$ for all $k\in \NN$
there exist a finite time $T_{rec} = T_{rec}(\la) >0$ and a constant $\kappa= \kappa(\la)>0$,
which satisfy the following.
For each $\gamma>0$ there is $\e_0 = \e_0(\gamma)>0$,
such that for all $0 <\e \lqq \e_0$, $T_{rec}+ \kappa\gamma |\ln\e|\lqq t$ and
$x\in D^{\pm}(\e^{\gamma})$
\[
|u(t;x)-\phi^\pm|_\infty \lqq (1/2)\e^{2\gamma}.
\]
\end{props}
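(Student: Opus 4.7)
The plan is to split the time interval $[0, T_{rec} + \kappa \gamma |\ln \e|]$ into a global entry phase, which drives the orbit into a fixed $|\cdot|_\infty$-ball around $\phi^\pm$, followed by a local linear contraction phase inside that ball. In each phase a contribution proportional to $\gamma |\ln \e|$ appears: in the global phase from the sojourn times near the unstable equilibria on $\sS$, in the local phase from the exponential rate of attraction at $\phi^\pm$. The $\e$-independent remainder $T_{rec}(\la)$ collects all bounded waiting times (dissipativity, transits between neighborhoods of equilibria).

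For the local phase, $\phi^\pm$ is a hyperbolic sink of (ChI) by Proposition \ref{pointwise convergence}, so the linearization $\Delta + f'(\phi^\pm)$ on $H$ has a spectral gap $-\rho = -\rho(\la) < 0$. A standard stable-manifold/Lyapunov argument, using that $f$ is smooth and that $f(\phi^\pm + v) - f(\phi^\pm) - f'(\phi^\pm) v$ vanishes to second order in $v$, yields constants $r_0 = r_0(\la) > 0$ and $C_0 = C_0(\la) \gqq 1$ such that
\[
|u(t;y) - \phi^\pm|_\infty \lqq C_0 e^{-\rho t} |y - \phi^\pm|_\infty, \qquad t \gqq 0,
\]
whenever $|y - \phi^\pm|_\infty \lqq r_0$. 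Hence reaching the $(1/2)\e^{2\gamma}$-ball from $B_{r_0}(\phi^\pm)$ takes at most $\rho^{-1}(2 \gamma |\ln \e| + \ln(2 C_0 r_0))$ time, which is $\lqq \kappa_1 \gamma |\ln \e|$ for any fixed $\kappa_1 > 2/\rho$ once $\e$ is small.

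For the global entry phase, I would show that there exist $T_{rec}(\la)$ and $\kappa_2(\la)$ such that for all sufficiently small $\e$ and every $x \in D^\pm(\e^\gamma)$ the orbit reaches $B_{r_0}(\phi^\pm)$ by time $T_{rec} + \kappa_2 \gamma |\ln \e|$. The argument decomposes as: (i) dissipativity of (ChI) (the global attractor sits inside the $|\cdot|_\infty$-unit ball) gives a universal absorbing time, after which every orbit lies in a bounded forward-invariant set; (ii) the energy functional of (ChI) decreases at a uniformly positive rate outside any fixed neighborhood of the finite set $\eE^\la$ (finite by the non-resonance assumption $\la \neq (k\pi)^2$), so by compactness of the attractor the total time spent on transits between such neighborhoods is bounded by a constant depending only on $\la$ and $r_0$; (iii) at each unstable $\psi \in \eE^\la \cap \sS$, hyperbolicity and the transversal intersection of stable and unstable manifolds (Proposition \ref{pointwise convergence}) imply that an orbit staying at distance $\gqq \e^\gamma$ from $\sS$ (hence from the local stable manifold of $\psi$) can remain in a fixed small neighborhood of $\psi$ for at most $\kappa_\psi \gamma |\ln \e|$ time, with $\kappa_\psi$ governed by the smallest unstable eigenvalue of the linearization at $\psi$. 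Maximizing over the finitely many $\psi$ yields $\kappa_2$.

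Setting $\kappa := \kappa_1 + \kappa_2$ and concatenating the two phases gives the claim. The main obstacle is step (iii): justifying the logarithmic sojourn bound for the semilinear parabolic flow near each unstable equilibrium requires an invariant-manifold decomposition in $H$ adapted to $\psi$, a Grobman--Hartman-type conjugation to the linearization (or a direct two-sided estimate on the growth of the unstable component along the orbit), and uniformity of the constants over the compact attractor. This is exactly where the Morse--Smale structure of (ChI) and the non-resonance condition $\la \neq (k\pi)^2$ enter, and it is the part of the proof most naturally deferred to the technical companion paper \cite{DHI10}.
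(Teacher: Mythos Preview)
Your proposal is correct and aligned with what the paper indicates: the text accompanying this proposition gives no proof beyond the remark that it ``relies on the hyperbolicity of the fixed points and the fine dynamics of the deterministic solution flow'' and defers everything to \cite{DHI10}, which is precisely the two-phase (global entry / local contraction) argument you sketch, with the Morse--Smale structure controlling the logarithmic sojourn near each unstable equilibrium. The only difference worth flagging is that the paper notes the result is established in \cite{DHI10} in the stronger $H$-topology and the $|\cdot|_\infty$-statement then follows from the embedding $|\cdot|_\infty \lqq \|\cdot\|$, whereas you work directly in $|\cdot|_\infty$; this is a cosmetic distinction rather than a different route.
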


\noindent This results relies on the hyperbolicity of the fixed points and
the fine dynamics of the deterministic solution flow. In \cite{DHI10} it is
proved in the stronger Hilbert space topology of $H$. The preceding theorem
follows then as a corollary.

\noindent We denote the jump increment of~$L$ at time $t\gqq 0$ by
$\Delta_t L := L(t) - L(t-)$, and decompose the process $L$ for $\rho\in
(0,1)$ and $\e>0$ in the following way. We call $\eta^\e$ the ``large
jump'' compound Poisson process with intensity $\beta_\e :=
\nu\left(\e^{-\rho} B_{1}^c(0)\right)$
and jump probability measure 
$\ds\nu(\cdot \cap \e^{-\rho} B_{1}^c(0))/\beta_\e$, 
and the complementary ``small jump'' process $\xi^\e:=L-\eta^\e.$ The
process $\xi^\e$ is a mean zero martingale in $H$ thanks to the symmetry of
$\nu$ with finite exponential moments. We define the jump times of
$\eta^\e$ as
\[
T_0 :=0, \qquad T_{k} := \inf\left\{t>T_{k-1}~\big|~\|\Delta_t L\| > \e^{-\rho} \right\},
\quad k\gqq 1,
\]
and the times between successive large jumps of $\eta^\e_t$ recursively as
$t_0 = 0$ and \mbox{$t_k := T_k-T_{k-1}$,} for $k\gqq 1.$ Their laws
$\lL(t_k)$ are exponential $EXP(\beta_\e)$. We shall denote the $k$-th
large jump by $W_0 = 0$ and \mbox{$W_k= \Delta_{T_k} L$} for $k\gqq 1.$

\begin{props}
For any mean zero $L^2(\PP; H)$-martingale $\xi = (\xi(t))_{t\gqq 0}$,
\mbox{$T>0$,} and initial value $x\in H$ equation (\ref{main sys}) driven
by $\e\xi$ instead of $\e L$ has a unique c\`adl\`ag mild solution
$(Y^\e(t;x))_{t\in[0,T]}$. The solution process $Y^\e$ induces a
homogeneous Markov family satisfying the Feller property.
\end{props}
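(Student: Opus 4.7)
The plan is to pass to the mild formulation, decouple the noise from the nonlinearity by a pathwise shift, and then reduce the problem to a deterministic semilinear parabolic PDE with random forcing. Writing $S(t) = e^{t\Delta}$ for the analytic heat semigroup generated by the Dirichlet Laplacian on $H = H_0^1(0,1)$, a mild solution of (\ref{main sys}) driven by $\e\xi$ in place of $\e L$ is a c\`adl\`ag $H$-valued process satisfying
\[
Y^\e(t;x) = S(t)x + \int_0^t S(t-s) f(Y^\e(s;x))\,\dd s + \e \int_0^t S(t-s)\,\dd \xi(s).
\]
First I would establish that the stochastic convolution $Z^\e(t) := \e\int_0^t S(t-s)\,\dd \xi(s)$ admits a c\`adl\`ag modification with values in $H$. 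For a mean-zero $L^2(\PP;H)$-martingale $\xi$, this is a standard application of the Kotelenez maximal inequality for stochastic convolutions with respect to contraction semigroups on Hilbert spaces.

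Second, I would introduce the shift $V^\e(t;x) := Y^\e(t;x) - Z^\e(t)$. Then $Y^\e$ solves the mild equation if and only if, for each $\om$, the function $V^\e(\cdot,\om)$ satisfies the random but pathwise deterministic integral equation
\[
V^\e(t;x) = S(t)x + \int_0^t S(t-s) f\bigl(V^\e(s;x) + Z^\e(s)\bigr)\,\dd s,
\]
with c\`adl\`ag forcing $Z^\e(\cdot,\om)$. Local existence and uniqueness of $V^\e$ in $C([0,T^*];H)$ then follow by a Banach fixed-point argument, exploiting the continuous embedding $H\hookrightarrow \cC_0([0,1])$ in one space dimension so that $f(z) = -\la(z^3-z)$ is Lipschitz on bounded subsets of $H$.

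Third, global existence requires an a priori estimate. Testing the equation for $V^\e$ against $V^\e$ in $L^2(0,1)$ and using the dissipative sign condition $z f(z) = -\la z^4 + \la z^2 \lqq c$, one obtains a bound on $\|V^\e(t)\|$ in terms of $\sup_{s\lqq t}|Z^\e(s)|_\infty$; the cubic term, rather than obstructing existence, provides the coercivity that absorbs the noise. This extends $V^\e$ globally on $[0,T]$, hence $Y^\e = V^\e + Z^\e$ is the unique c\`adl\`ag mild solution. Pathwise continuity of $x\mto V^\e(t;x)$ in $H$, via Gronwall applied to the difference of two solutions, yields continuity of $x\mto Y^\e(t;x)$ in probability, which is the Feller property.

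Finally, for the homogeneous Markov property, I would use that the driving martingale $\xi$ of interest here (in the sequel, $\xi^\e$) has stationary independent increments, so that for $h>0$
\[
Z^\e(t+h) = S(h)Z^\e(t) + \e\int_t^{t+h} S(t+h-s)\,\dd \xi(s),
\]
with the second term independent of $\fF_t$ and equal in law to $Z^\e(h)$. Combined with uniqueness and the flow property of the pathwise solution map, this gives the homogeneous Markov property by standard arguments. The main obstacle is the interplay between the cubic nonlinearity and the heavy-tailed driver: the c\`adl\`ag version of the stochastic convolution must be controlled well enough for the dissipative a priori estimate to close path-by-path, even though the jumps of $\xi$ produce no exponential moments. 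The dissipative sign condition on $f$ is precisely what allows global existence in spite of this.
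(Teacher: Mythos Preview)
The paper does not provide its own proof of this proposition; immediately after the statement it simply says ``A proof can be found in \cite{PZ07}, Chapter 10'' (Peszat--Zabczyk). Your sketch follows exactly the standard route that reference employs: obtain a c\`adl\`ag modification of the stochastic convolution via a maximal inequality for contraction semigroups, subtract it to reduce to a pathwise deterministic semilinear parabolic equation with c\`adl\`ag forcing, use the one-dimensional Sobolev embedding $H\hookrightarrow \cC_0([0,1])$ so that the cubic is locally Lipschitz for local existence, and close a global a priori bound from the dissipative sign condition $zf(z)\lqq c$. Feller continuity then comes from Gronwall on the difference of two solutions. This is correct and is essentially the content of the cited chapter, so there is nothing to contrast.

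One remark worth making explicit: the homogeneous Markov property as literally stated---``for any mean zero $L^2(\PP;H)$-martingale $\xi$''---cannot hold in that generality; it requires $\xi$ to have stationary independent increments. You correctly handle this by specializing, for the Markov step, to the small-jump L\'evy martingale $\xi^\e$ that is actually used later. The paper's wording is slightly loose here, and your reading is the intended one.
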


\noindent A proof can be found in \cite{PZ07}, Chapter 10. By localization
this notion of solution is extended to the heavy-tailed process $L$. In
\cite{DHI10} this will be carried out in detail.

\begin{cors}
For $x\in H$ equation (\ref{main sys}) has a c\`adl\`ag mild solution $(X^\e(t;x))_{t\gqq 0}$,
which satisfies the strong Markov property.
\end{cors}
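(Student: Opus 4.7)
The plan is the classical interlacing construction based on the decomposition $L=\xi^\e+\eta^\e$ introduced above. The small-jump part $\xi^\e$ is a mean-zero $L^2(\PP;H)$-martingale to which the preceding Proposition applies directly, while the large-jump part $\eta^\e$ is a compound Poisson process of finite intensity $\beta_\e<\infty$, independent of $\xi^\e$ by the L\'evy-It\^o decomposition. In particular $T_k\to\infty$ almost surely, so on every bounded time interval $\eta^\e$ has only finitely many jumps, which is what makes the interlacing well defined on $[0,\infty)$.

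Concretely, on the random interval $[0,T_1)$ the Proposition, applied on an arbitrarily large deterministic horizon and then restricted to the stopping time $T_1$, yields a unique c\`adl\`ag mild solution $Y^\e(\cdot;x)$ of (\ref{main sys}) driven by $\e\xi^\e$. Set $X^\e(t;x):=Y^\e(t;x)$ on $[0,T_1)$ and $X^\e(T_1;x):=Y^\e(T_1-;x)+\e W_1$ to incorporate the first large jump. Proceeding recursively, suppose $X^\e$ is already defined on $[0,T_k]$; since the increments of $\xi^\e$ after $T_k$ form, conditionally on $\fF_{T_k}$, a fresh mean-zero $L^2$-martingale independent of $\eta^\e$, apply the Proposition again on $[T_k,T_{k+1})$ with initial condition $X^\e(T_k;x)$ and add the jump $\e W_{k+1}$ at $T_{k+1}$. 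Because the stochastic convolution is linear in the driving noise and $T_k\to\infty$ a.s., the concatenated process is c\`adl\`ag on $[0,\infty)$ and a mild solution of (\ref{main sys}) driven by $\e L = \e\xi^\e+\e\eta^\e$.

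The delicate step is the strong Markov property, since $X^\e$ is obtained by pasting rather than as the solution of a single well-posed equation in the classical sense. The simple Markov property at deterministic times $t$ is verified by conditioning on the number of large jumps in $[0,t]$ and on $\fF_{T_k}$, and then using the homogeneous Markov property of $Y^\e$ provided by the Proposition together with the memoryless property of the waiting times $t_k\sim EXP(\beta_\e)$ and the independence of the $W_k$ from everything else. The upgrade to the strong Markov property then follows from Feller regularity: the transition semigroup of $X^\e$ inherits continuity from that of $Y^\e$, because the translations $y\mto y+\e W$ are continuous for each $W$ and the Poissonian averaging over the independent number of large jumps up to time $t$ preserves this continuity (the resulting series converges uniformly on bounded time intervals since $\beta_\e t<\infty$). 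The remaining technical points, in particular the precise localization needed to make the above concatenation rigorous in the infinite-dimensional mild formulation, are carried out in \cite{DHI10}.
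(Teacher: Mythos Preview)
Your proposal is correct and follows exactly the approach the paper indicates: the paper does not give a proof of this corollary beyond the sentence ``By localization this notion of solution is extended to the heavy-tailed process $L$. In \cite{DHI10} this will be carried out in detail,'' and your interlacing construction via $L=\xi^\e+\eta^\e$ is precisely this localization. You have in fact supplied more detail than the paper itself, including the argument for the strong Markov property via Feller regularity, which the paper defers entirely to \cite{DHI10}.
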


\begin{defns} 
For $\gamma\in (0,1)$, $\e>0$, and the c\`adl\`ag mild solution
$X^\e(\cdot;x)$  of (\ref{main sys}) with initial position \mbox{$x\in \ti
D^\pm(\e^\gamma)$} we define the \textit{first exit time from the reduced
domain of attraction}
\[
\tau^\pm_x(\e) :=\inf\{t>0~|~X^\e(t;x)\notin D^\pm(\e^\gamma)\}.
\]

\end{defns}

We now introduce the following two hypotheses, which will be required in
our main theorem. They are natural conditions on the regularly varying
L\'evy measure $\nu$ with respect to the underlying deterministic dynamics
in terms of its limit measure $\mu$. See \cite{HL06} for the relationship
between $\nu$ and $\mu$, and (\ref{lambda epsilon}) below for the
particular scaling function $\frac{1}{\epsilon}$ needed here.

\noindent\textbf{(H.1) Non-trivial transitions: } $\mu\left(\left(D^\pm_0\right)^c\right)>0.$

\noindent\textbf{(H.2) Non-degenerate limiting measure: }\textit{For
$\alpha\in (0,2)$ and $\Gamma>0$ according to Proposition \ref{small
deviations from the deterministic system} let
\begin{equation}\label{constants}
\hspace{-0.5cm}0 < \Theta < \frac{2-\al}{2\al}, \quad \rho\in (\frac{1}{2},
\frac{2-\al}{2-(1-\Theta)\alpha}),  \quad 0 < \gamma <
\frac{(2-\al)(1-\rho)- \Theta \alpha \rho}{2(\Gamma +2)}.
\end{equation}
\noindent For $k = \pm$ and $\eta>0$ there is $\e_0>0$ such that for all
$0<\e \lqq \e_0$}
\begin{equation}\label{measure hypothesis'}
\mu\left(H\setminus \left((D^+(\e^\gamma, \e^{2\gamma}, \e^{2\gamma})\cup
D^-(\e^\gamma, \e^{2\gamma}, \e^{2\gamma})) +
B_{\e^{2\gamma}}(0)\right)-\phi^k\right) < \eta.
\end{equation}

While (H.1) ensures that there actually are transitions also by ``large''
jumps with positive probability, (H.2) implies that the slow deterministic
dynamics close to the separatrix does not distort the generic exit scenario
of $X^\e$. For comparable finite dimensional situations with absolutely
continuous L\'evy measure $\nu\ll dx$ these hypotheses are always
satisfied.

\noindent For $\e>0$ we define the characteristic rate of the system
(\ref{main sys}) by
\begin{align}
\la^\pm(\e) := \nu\left(\frac{1}{\e} \left(D_0^{\pm}\right)^c\right). 
\end{align}

\noindent According to \cite{BGT87} and \cite{HL06} for $\nu$ chosen above
there is a slowly varying function $\ell_\nu = \ell: [0, \infty)\ra [0,
\infty)$ such that for all $\e>0$
\begin{align}\label{lambda epsilon}
\la^\pm(\e) =& ~\e^{\al}\;\ell (\frac{1}{\e})\;
\mu\left((D_0^\pm)^c\right), \quad \mbox{ and } \quad
~\beta_\e = ~\e^{\alpha\rho}\,\,\ell
(\frac{1}{\e^\rho})\,\,\mu\left(B_1^c(0)\right).
\end{align}

\noindent We may now state the main theorem.

\begin{thms}\label{first exit times}
Given the Chafee-Infante parameter $\pi^2 < \la \neq (k\pi)^2$ for all
$k\in \NN$, we suppose that Hypotheses (\textbf{H.1}) and (\textbf{H.2})
are satisfied. Then for any $\theta > -1$
\[
\lim_{\e \ra 0+} \EE\left[\sup_{x\in \ti D^\pm(\e^\gamma)}
\exp\left(-\theta \la^\pm(\e) \tau^\pm_x(\e)\right)\right] =
\frac{1}{1+\theta}.
\]
The supremum in the formula can be replaced by the infimum.

\end{thms}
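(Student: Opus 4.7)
The plan is to recognize that $\frac{1}{1+\theta}$ is the Laplace transform at $\theta > -1$ of the $EXP(1)$ distribution, so the theorem amounts to convergence in distribution of $\la^\pm(\e)\tau^\pm_x(\e)$ to $EXP(1)$, uniformly in expectation over $x\in \ti D^\pm(\e^\gamma)$. Following the programme initiated in \cite{IP08}, I would decompose the driving noise as $L = \xi^\e + \eta^\e$ via the threshold $\e^{-\rho}$ and exploit a clear separation of scales: the inter-jump times of $\eta^\e$ are $EXP(\beta_\e)$ with $\beta_\e \approx \e^{\al\rho}$, the deterministic relaxation time in Proposition \ref{logarithmic convergence time} is of order $|\ln \e|$, and the exit rate $\la^\pm(\e) \approx \e^\al$ is much slower than $\beta_\e$. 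The constants in (\ref{constants}) are chosen precisely so that $|\ln \e| \ll \beta_\e^{-1} \ll \la^\pm(\e)^{-1}$, whence the system equilibrates near $\phi^\pm$ between two successive large jumps and the exit happens on the slowest of the three scales.

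Between large jumps the process is driven only by the $L^2$-martingale $\xi^\e$, whose jumps are bounded by $\e^{-\rho}$. Combining small-deviation estimates for $\xi^\e$ with the positive invariance of $\ti D^\pm(\e^\gamma)$ and $D^\pm(\e^\gamma)$ under the deterministic flow, I would show that with probability $1-o(\la^\pm(\e)/\beta_\e)$ the mild solution stays within $\e^{2\gamma}$ of the deterministic flow on $[T_{k-1}, T_k)$; then Proposition \ref{logarithmic convergence time} forces $X^\e(T_k^-)\in B_{\e^{2\gamma}}(\phi^\pm)$ whenever $X^\e(T_{k-1})\in D^\pm(\e^\gamma)$ and $t_k \gqq T_{rec} + \kappa\gamma|\ln\e|$, the latter failing only with probability $o(1)$ in $\e$.

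Conditional on $X^\e(T_k^-)\in B_{\e^{2\gamma}}(\phi^\pm)$, the displacement $\e W_k$ has law $\e\cdot\nu(\cdot\cap\e^{-\rho} B_1^c(0))/\beta_\e$, and by regular variation together with (\ref{lambda epsilon}) the conditional probability that $X^\e(T_k)$ exits $D^\pm(\e^\gamma)$ equals
\[
p^\pm(\e) \;=\; \frac{\la^\pm(\e)}{\beta_\e}\,(1+o(1)), \qquad \e\to 0+.
\]
Hypothesis (\textbf{H.1}) ensures $p^\pm(\e)>0$ in the limit, while Hypothesis (\textbf{H.2}), together with the nesting $D^\pm(\e^\gamma,\e^{2\gamma},\e^{2\gamma}) \subset \ti D^\pm(\e^\gamma) \subset D^\pm(\e^\gamma)$, ensures that the ``ambiguous'' event of landing in the separatrix layer $\ti D^0(\e^\gamma)$ has mass $o(p^\pm(\e))$; a non-exiting jump lands cleanly inside $D^\pm(\e^\gamma,\e^{2\gamma},\e^{2\gamma})$, which relaxes back to $B_{\e^{2\gamma}}(\phi^\pm)$ before the next large jump. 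Consequently, $N^\pm(\e) := \min\{k\gqq 1~|~ X^\e(T_k)\notin D^\pm(\e^\gamma)\}$ is asymptotically $\mathrm{Geom}(p^\pm(\e))$, $\tau^\pm_x(\e) = T_{N^\pm(\e)} + O(|\ln\e|)$, and the geometric sum $T_{N^\pm(\e)}$ of independent $EXP(\beta_\e)$-variables is $EXP(\beta_\e\, p^\pm(\e))=EXP(\la^\pm(\e)(1+o(1)))$. Rescaling by $\la^\pm(\e)$ yields the $EXP(1)$ limit, and taking Laplace transforms gives the stated identity; the ``sup/inf'' dichotomy is resolved by the uniformity in $x\in \ti D^\pm(\e^\gamma)$ of all the above estimates.

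The main obstacle is the quantitative synchronization of the three time scales and, specifically, controlling large jumps that land close to the separatrix, where Proposition \ref{logarithmic convergence time} offers no $|\ln\e|$-relaxation guarantee. This is exactly the role of (\textbf{H.2}) and of working with the shrunk domains $\ti D^\pm(\e^\gamma)$: the $\e^{2\gamma}$ buffer absorbs both the small-jump tracking error and the sub-threshold component of a ``harmless'' large jump, so that the geometric-trial reduction is preserved to leading order and the $EXP(1)$ limit emerges cleanly.
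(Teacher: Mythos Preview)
Your proposal is correct and follows essentially the same route as the paper: the large/small jump decomposition at threshold $\e^{-\rho}$, the small-deviation control between large jumps (Proposition~\ref{small deviations from the deterministic system}), the $|\ln\e|$-relaxation (Proposition~\ref{logarithmic convergence time}), and the identification of the exit probability at each large jump as $\la^\pm(\e)/\beta_\e\,(1+o(1))$ via regular variation and~(H.2). The only difference is in the final packaging: rather than arguing that $N^\pm(\e)$ is asymptotically geometric and then invoking the exponential law of a geometric sum of independent exponentials, the paper works directly with the Laplace transform, writing $\EE[\sup_x e^{-\theta\la(\e)\tau_x(\e)}]$ as a sum over $\{\tau_x=T_k\}$ and $\{\tau_x\in(T_{k-1},T_k)\}$, factorizing via the strong Markov property into powers of $\EE[e^{-\theta\la(\e)T_1}\ind(A_y)]$ times $\EE[e^{-\theta\la(\e)T_1}\ind(B_y)]$, and summing the resulting geometric series to obtain matched upper and lower bounds $\frac{1\pm C}{1+\theta\mp C}$ (Propositions~\ref{the upper estimate} and~\ref{the lower estimate}); this makes the uniformity in $x$ and the control of the error terms more explicit, but the mechanism is the one you describe.
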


The theorem states that in the limit of small $\e$, suitably renormalized
exit times from reduced domains of attraction have unit exponential laws.

\section{The Small Deviation of the Small Noise Solution}\label{chapter small deviations}

\noindent This section is devoted to a small deviations' estimate. It
quantifies the fact, that in the time interval between two adjacent large
jumps the solution of the Chafee-Infante equation perturbed by only the
small noise component deviates from the solution of the deterministic
equation by only a small $\e$-dependent quantity, with probability
converging to 1 in the small noise limit $\e\to 0.$ Define the stochastic
convolution $\xi^*$ with respect to the small jump part $\xi^\e$ by
$\xi^*(t) = \int_0^t S(t-s)\dd \xi^\e(s)$ for $t\gqq 0$ (see \cite{PZ07}).
In order to control the deviation for $Y^\e-u$ for small $\e>0$, we
decompose $Y^\e = u + \e\xi^* + R^\e$. By standard methods we obtain in
\cite{DHI10} the following lemmas.

\begin{lems} \label{noise control2} For $\rho \in \left(0,1\right)$, $\gamma >0$, \mbox{$p > 0$}
and $0 <\Theta< 1$ there are constants $C >0$ and $\e_0>0$ such that
for $0<\e \lqq \e_0$ and $T\gqq 0$
\[
\PP\big(\sup_{t\in [0,T]} \|\e\xi^*_t\| \gqq \e^{p} \big) \lqq C\;  T\;\e^{2-2p-(2-(1-\Theta)\al)\rho}.
\]
\end{lems}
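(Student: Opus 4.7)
The plan is to combine Chebyshev's inequality with a Hilbert-space maximal inequality for the stochastic convolution, and then to estimate the resulting second moment via the regular variation of the L\'evy measure together with a Potter-type bound on its slowly varying component. The key structural input is that the Dirichlet heat semigroup $S(t)=e^{t\Delta}$ is a contraction on $H=H_0^1(0,1)$ with respect to $\|\cdot\|=|\nabla\cdot|$, which is exactly what is needed to control $\xi^*$ by the driving martingale $\xi^\e$.

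First I would observe that, by construction of the large--small jump decomposition, $\xi^\e$ is a c\`adl\`ag $H$-valued pure-jump L\'evy martingale whose jumps are bounded in norm by $\e^{-\rho}$; it is therefore square integrable, and the $H$-valued It\^o isometry gives
\[
\EE\bigl[\|\xi^\e_T\|^2\bigr]=T\int_{\|y\|\lqq \e^{-\rho}}\|y\|^2\,\nu(\dd y).
\]
Contractivity of $S$ on $H$ then allows me to invoke the standard maximal estimate for stochastic convolutions driven by square-integrable martingales (Kotelenez's inequality, equivalently the factorization method of \cite{PZ07}), which yields a constant $C>0$ independent of $\e$ and $T$ with
\[
\EE\bigl[\sup_{t\in[0,T]}\|\xi^*_t\|^2\bigr]\lqq C\,\EE\bigl[\|\xi^\e_T\|^2\bigr].
\]
A Chebyshev step then reduces the claim to estimating the truncated second moment on the right, since $\PP(\sup_{t\in[0,T]}\|\e\xi^*_t\|\gqq\e^p)\lqq\e^{2-2p}\,\EE[\sup_{t\in[0,T]}\|\xi^*_t\|^2]$.

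For the truncated second moment I would exploit the regular variation of $\nu$ with index $\al\in(0,2)$: by Karamata's theorem the map $r\mto\int_{\|y\|\lqq r}\|y\|^2\nu(\dd y)$ is regularly varying of index $2-\al$, hence equal to $r^{2-\al}\ti\ell(r)$ for some slowly varying $\ti\ell$. Potter's inequality applied to $\ti\ell$ with parameter $\Theta\al$ produces, for every $\Theta\in(0,1)$, a threshold $\e_0>0$ such that $\ti\ell(\e^{-\rho})\lqq\e^{-\Theta\al\rho}$ whenever $\e\lqq\e_0$, so that
\[
\int_{\|y\|\lqq\e^{-\rho}}\|y\|^2\,\nu(\dd y)\lqq C\,\e^{-\rho(2-(1-\Theta)\al)}.
\]
Combining this with the previous two displays yields exactly the announced exponent $2-2p-(2-(1-\Theta)\al)\rho$, with a prefactor linear in $T$. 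The only real obstacle is the rigorous form of the maximal inequality in the c\`adl\`ag infinite-dimensional setting with a constant independent of $\e$ and linear in $T$; once this is granted, the remainder is bookkeeping with the regular-variation exponents and the Potter threshold, and no fine information on the Chafee--Infante dynamics enters, only the contractivity of $S$ on $H$.
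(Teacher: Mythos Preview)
The paper does not prove this lemma; it says only that it follows ``by standard methods'' and defers the details to \cite{DHI10}. Your proposal is a correct and natural instantiation of those standard methods: Chebyshev, a maximal inequality for the stochastic convolution against the square-integrable small-jump martingale $\xi^\e$, and a Karamata/Potter estimate on the truncated second moment $\int_{\|y\|\lqq\e^{-\rho}}\|y\|^2\,\nu(\dd y)$. The exponent bookkeeping is right and lands exactly on $2-2p-(2-(1-\Theta)\al)\rho$, and you correctly identify contractivity of $S(t)$ on $H$ as the structural input needed for the convolution estimate. One terminological caveat: Kotelenez's inequality in its original form is for continuous martingales; in the pure-jump c\`adl\`ag setting you should invoke the maximal inequalities for Poisson-driven convolutions under a contraction semigroup (as treated for instance in \cite{PZ07}), which do deliver the bound $\EE\bigl[\sup_{t\in[0,T]}\|\xi^*_t\|^2\bigr]\lqq C\,T\int_{\|y\|\lqq\e^{-\rho}}\|y\|^2\,\nu(\dd y)$ with $C$ independent of $\e$ and $T$.
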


\noindent Define for $T>0$, $\Gamma>0$ and $\gamma>0$ the small convolution event
\[
\eE_T(\e^{(\Gamma +2)\gamma}):= \{\sup_{r\in [0,T]} ||\e \xi^*(r)|| < \e^{(\Gamma +2)\gamma}\}\quad \e>0.
\]

\noindent By perturbation arguments, the stability of $\phi^\pm$, Proposition \ref{logarithmic convergence time} and Lemma \ref{noise control2}
we may estimate the remainder term $R^\e$ for small $\e$.

\begin{lems}\label{R control}
There is a constant $\Gamma>0$ such that for $\rho\in(1/2,1), \gamma>0$,
there exists $\e_0>0$ such that for $0< \e\lqq \e_0$, $T>0$, $x\in
D^\pm(\e^\gamma)$ on the event $\eE_T(\e^{(\Gamma+2)\gamma})$ we have the estimate
\begin{equation*}
\sup_{t\in [0,T]} |R^\e(t;x)|_\infty \lqq \frac{1}{4} \e^{2\gamma}.
\end{equation*}
\end{lems}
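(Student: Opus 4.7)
My plan is to put $R^\e$ in mild form and treat it by Gronwall-type estimates, splitting the time interval at $t^\e := T_{rec} + \kappa\gamma|\ln\e|$, the deterministic convergence time supplied by Proposition \ref{logarithmic convergence time}. Subtracting the mild equations for $Y^\e$ and $u$ the stochastic convolutions cancel, and $R^\e = Y^\e - u - \e\xi^*$ satisfies
\[
R^\e(t) = \int_0^t S(t-s)\bigl(f(Y^\e(s;x)) - f(u(s;x))\bigr)\,\dd s.
\]
Because $S$ is a contraction on $\cC_0([0,1])$ and the embedding $H\hookrightarrow\cC_0([0,1])$ gives $|\e\xi^*|_\infty\lqq \|\e\xi^*\|< \e^{(\Gamma+2)\gamma}$ on $\eE_T(\e^{(\Gamma+2)\gamma})$, the right-hand side will reduce to a Lipschitz bound on $f$ evaluated along the trajectories.

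I would then introduce a bootstrap stopping time
\[
\tau := \inf\{\, t\in[0,T]~:~|R^\e(t)|_\infty > \tfrac{1}{4}\e^{2\gamma}\,\}\wedge T
\]
and work on $[0,\tau]$. On this interval $|Y^\e|_\infty \lqq |u|_\infty + \e^{(\Gamma+2)\gamma} + \tfrac{1}{4}\e^{2\gamma}$, which is uniformly bounded because $u(\cdot;x)$ stays in the attractor ball by dissipativity of (ChI). Hence the cubic $f$ is Lipschitz with some constant $L = L(\la) > 0$ along both trajectories, and
\[
|R^\e(t)|_\infty \lqq L\int_0^t\bigl(|\e\xi^*(s)|_\infty + |R^\e(s)|_\infty\bigr)\,\dd s.
\]
Gronwall on $[0,t^\e\wedge\tau]$ yields $|R^\e(t)|_\infty\lqq L t\, e^{Lt}\,\e^{(\Gamma+2)\gamma}$; evaluated at $t=t^\e$ this is of order $|\ln\e|\,\e^{(\Gamma+2-L\kappa)\gamma}$. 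Choosing $\Gamma > L\kappa$ (fixed, independent of $\e$ and $T$) absorbs the logarithmic factor and forces $|R^\e(t^\e)|_\infty \lqq \tfrac{1}{4}\e^{2\gamma}$ for $\e$ small enough.

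For the asymptotic window $t^\e \lqq t \lqq T$, Proposition \ref{logarithmic convergence time} places $u(t;x)$ inside the $\tfrac{1}{2}\e^{2\gamma}$-ball around $\phi^\pm$. Recentering, $R^\e$ obeys
\[
R^\e(t) = e^{(t-t^\e)A_\pm}R^\e(t^\e) + \int_{t^\e}^t e^{(t-s)A_\pm}\bigl(h(Y^\e(s)) - h(u(s)) + \varrho(s)\bigr)\,\dd s,
\]
where $A_\pm := \Delta + f'(\phi^\pm)$ has strictly negative spectrum by hyperbolicity of the stable fixed points, $h$ collects the quadratic and higher-order terms of $f$ at $\phi^\pm$, and $\varrho = (f'(u) - f'(\phi^\pm))(\e\xi^* + R^\e)$ is of higher order in $\e^{\gamma}$. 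The spectral gap of $A_\pm$ combined with the smallness of $\e\xi^*$ and of $u-\phi^\pm$ closes the bootstrap and delivers $\tau = T$.

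The main obstacle is to arrange all estimates to hold \emph{uniformly in $T$}: the pure Gronwall bound from the first step degrades exponentially in $T$, so the proof must genuinely exploit the contraction produced by the hyperbolic linearization at $\phi^\pm$. The constant $\Gamma$ has to be chosen large enough to absorb both $L\kappa$ from the transient phase and any additional polynomial-in-$\e^{-1}$ Sobolev constants that surface when comparing $\|\cdot\|$ and $|\cdot|_\infty$ along the heat flow in the bootstrap closure.
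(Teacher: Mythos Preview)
The paper does not supply a proof here; it only names the ingredients (``perturbation arguments, the stability of $\phi^\pm$, Proposition~\ref{logarithmic convergence time} and Lemma~\ref{noise control2}'') and defers the details to \cite{DHI10}. Your two-phase scheme---a Gronwall bound on $[0,t^\e]$ with $t^\e=T_{rec}+\kappa\gamma|\ln\e|$, followed by contraction through the spectral gap of $A_\pm=\Delta+f'(\phi^\pm)$---is precisely the intended architecture and matches that sketch.

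One step is not quite closed. You choose $\Gamma>L\kappa$ with $L$ a Lipschitz constant for the cubic $f$ along $u$ and $Y^\e$; but this $L$ is governed by $\sup_{[0,t^\e]}|u(\cdot;x)|_\infty^2$, and $D^\pm(\e^\gamma)$ is not bounded in $H$ (by construction it exhausts the full basin $D^\pm$ as $\e\to 0$). Since the lemma demands a single $\Gamma$ valid for every $x\in D^\pm(\e^\gamma)$, the naive two-sided Lipschitz bound does not deliver a uniform $\Gamma$. The remedy is to use the \emph{one-sided} Lipschitz structure of the Chafee--Infante nonlinearity, $(f(a)-f(b))\,\mathrm{sgn}(a-b)\lqq\la\,|a-b|$, which via a comparison/maximum-principle argument yields a Gronwall constant $\la$ independent of $|u|_\infty$; then $\Gamma>\la\kappa$ is a structural choice and your transient bound goes through uniformly in $x$. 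A minor algebraic slip: your asymptotic-phase integrand omits the linear forcing $f'(\phi^\pm)\,\e\xi^*$ (recheck the Duhamel formula with generator $A_\pm$), but this term is $O(\e^{(\Gamma+2)\gamma})$ and is absorbed by the contractive semigroup $e^{tA_\pm}$ exactly like the terms you kept.
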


\noindent We next combine Proposition \ref{logarithmic convergence time},
Lemma \ref{noise control2} and Lemma \ref{R control}, to obtain the
following proposition on small deviations on deterministic time intervals.

\begin{props}\label{fixed horizon}
There is a constant $\Gamma>0$ such that for $0 < \al < 2$ given the
conditions
\[
0 < \Theta < \frac{2-\al}{\al}, \qquad \rho\in (1/2,
\frac{2-\al}{2-(1-\Theta)\alpha}),  \qquad 0 < \gamma <
\frac{(2-\al)(1-\rho)- \Theta \alpha \rho}{2(\Gamma +2)},
\]
there exist $\e_0>0$ and $C>0$ such that for any $T>0$, $0 <\e \lqq
\e_0$ and $x\in D^\pm(\e^\gamma)$
\begin{equation}\label{fixedhorizon}
\PP\big(\sup_{s\in [0,T]} |Y^\e(s;x) -u(s;x)|_\infty \gqq (1/2)
\e^{2\gamma} \big) \lqq C \;T\; \e^{2-2(\Gamma +2)\gamma -
(2-(1-\Theta)\al)\rho}.
\end{equation}
\end{props}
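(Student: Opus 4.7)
The plan is to combine the decomposition $Y^\e = u + \e\xi^* + R^\e$ with the three preceding results (Proposition \ref{logarithmic convergence time}, Lemma \ref{noise control2}, Lemma \ref{R control}) in a direct union-bound argument. The governing idea is that the rare event $\{\sup_{s\in[0,T]}|Y^\e(s;x)-u(s;x)|_\infty\geq(1/2)\e^{2\gamma}\}$ is forced to occur inside the complement of the small convolution event $\eE_T(\e^{(\Gamma+2)\gamma})$, and Lemma \ref{noise control2} furnishes a quantitative bound on the probability of that complement.

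First I would unpack the decomposition: by the triangle inequality in the $|\cdot|_\infty$-norm,
\[
\sup_{s\in[0,T]}|Y^\e(s;x)-u(s;x)|_\infty \;\lqq\; \sup_{s\in[0,T]}|\e\xi^*(s)|_\infty + \sup_{s\in[0,T]}|R^\e(s;x)|_\infty.
\]
Now I would work on the event $\eE_T(\e^{(\Gamma+2)\gamma})$. Using the continuous injection $H\hookrightarrow\cC_0(0,1)$, which gives $|\cdot|_\infty\lqq\|\cdot\|$, the first term is bounded by $\e^{(\Gamma+2)\gamma}$. Since $\Gamma>0$ we have $(\Gamma+2)\gamma>2\gamma$, hence for $\e$ sufficiently small (depending only on $\Gamma$ and $\gamma$) one obtains $\e^{(\Gamma+2)\gamma}\lqq(1/4)\e^{2\gamma}$. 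The second term is controlled directly by Lemma \ref{R control}, which applies because $x\in D^\pm(\e^\gamma)$ and we are on $\eE_T(\e^{(\Gamma+2)\gamma})$; it yields $\sup_{s\in[0,T]}|R^\e(s;x)|_\infty\lqq(1/4)\e^{2\gamma}$. Adding the two contributions gives $(1/2)\e^{2\gamma}$ exactly, so
\[
\bigl\{\sup_{s\in[0,T]}|Y^\e(s;x)-u(s;x)|_\infty\gqq(1/2)\e^{2\gamma}\bigr\}\;\subseteq\;\eE_T(\e^{(\Gamma+2)\gamma})^c.
\]

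To finish I would apply Lemma \ref{noise control2} with the exponent $p=(\Gamma+2)\gamma$, producing
\[
\PP\bigl(\eE_T(\e^{(\Gamma+2)\gamma})^c\bigr)\;\lqq\;C\,T\,\e^{2-2(\Gamma+2)\gamma-(2-(1-\Theta)\al)\rho},
\]
which is exactly the bound claimed in (\ref{fixedhorizon}). A final sanity check is needed that the parameter window $0<\Theta<(2-\al)/\al$, $\rho\in(1/2,(2-\al)/(2-(1-\Theta)\al))$, $0<\gamma<(2-\al)(1-\rho)-\Theta\al\rho)/(2(\Gamma+2))$ is compatible with the hypotheses of Lemmas \ref{noise control2} and \ref{R control} (it is: $\rho\in(1/2,1)$, $\gamma>0$, $0<\Theta<1$ are the standing assumptions there) and makes the exponent $2-2(\Gamma+2)\gamma-(2-(1-\Theta)\al)\rho$ strictly positive, so that the right-hand side tends to $0$ as $\e\to 0$ for any fixed $T$.

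This argument is essentially bookkeeping: no substantially new idea is required beyond choosing the threshold $p=(\Gamma+2)\gamma$ large enough to beat the target scale $\e^{2\gamma}$ while keeping the exponent in Lemma \ref{noise control2} positive. The only step requiring mild care is the quantitative verification that $(\Gamma+2)\gamma>2\gamma$ together with the small-$\e$ threshold produces the clean factor $(1/4)\e^{2\gamma}$, so that the sum with the $R^\e$-bound lands at exactly $(1/2)\e^{2\gamma}$ — matching the constant in the deterministic recurrence Proposition \ref{logarithmic convergence time}, which one will ultimately need when iterating this estimate between large jumps of $\eta^\e$ in the proof of Theorem \ref{first exit times}.
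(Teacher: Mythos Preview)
Your argument is correct and matches the paper's own approach: the paper does not spell out a proof but simply states that the proposition follows by combining Proposition~\ref{logarithmic convergence time}, Lemma~\ref{noise control2} and Lemma~\ref{R control}, which is precisely the union-bound via the decomposition $Y^\e=u+\e\xi^*+R^\e$ that you carry out, with Lemma~\ref{noise control2} applied at $p=(\Gamma+2)\gamma$.
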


\noindent This can be generalized to the first jump time $T_1$ replacing
$T$.

\begin{props}\label{small deviations from the deterministic system} There is a constant
$\Gamma >0$ such that for $0<\alpha<2$ given the conditions
\[
0 < \Theta < \frac{2-\al}{\al}, \qquad \rho\in (1/2,
\frac{2-\al}{2-(1-\Theta)\alpha}), \qquad 0 < \gamma <
\frac{(2-\al)(1-\rho)- \Theta \alpha \rho}{2(\Gamma +2)},
\]
there exist constants $\vt = \vt(\Theta, \rho, \gamma, \al) >\al (1-\rho)$,
$C_\vt>0$ and $\e_0>0$, which satisfy for all $0<\e \lqq \e_0$
\[
\PP\big(\exists\;x\in  D^\pm(\e^\gamma):~\sup_{s\in [0,T_1]} |Y^\e(s;x) -u(s;x)|_\infty
\gqq (1/2) \e^{2\gamma} \big) \lqq C_\vt \e^{\vt}.
\]
\end{props}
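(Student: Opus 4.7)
The plan is to reduce the random-horizon statement to the fixed-horizon estimate of Proposition~\ref{fixed horizon} by exploiting the independence of the large-jump clock $T_1$ from the small-jump martingale $\xi^\e$. First I would handle the ``$\exists x$'' quantifier uniformly. Writing $Y^\e(s;x) - u(s;x) = \e\xi^*(s) + R^\e(s;x)$, note that $\xi^*$ does not depend on $x$, while the bound in Lemma~\ref{R control} on $R^\e$ is uniform in $x\in D^\pm(\e^\gamma)$ on the event $\eE_T(\e^{(\Gamma+2)\gamma})$. Combined with the embedding $|\cdot|_\infty \lqq \|\cdot\|$ and the numerical inequality $\e^{(\Gamma+2)\gamma} + \tfrac{1}{4}\e^{2\gamma} \lqq \tfrac{1}{2}\e^{2\gamma}$ for $\e$ small (valid since $\Gamma>0$), this yields
\[
\bigl\{\exists\, x \in D^\pm(\e^\gamma):~ \sup_{s \in [0,T_1]} |Y^\e(s;x) - u(s;x)|_\infty \gqq \tfrac{1}{2}\e^{2\gamma}\bigr\} \subset \eE_{T_1}(\e^{(\Gamma+2)\gamma})^c.
\]

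Next, the large-jump times $T_k$ are functionals of the compound Poisson process $\eta^\e$, hence independent of $\xi^\e$ and of $\xi^*$. Conditioning on $T_1 \sim EXP(\beta_\e)$ and applying Lemma~\ref{noise control2} with the exponent $M := 2 - 2(\Gamma+2)\gamma - (2-(1-\Theta)\alpha)\rho$ gives
\[
\PP\bigl(\eE_{T_1}(\e^{(\Gamma+2)\gamma})^c\bigr) = \int_0^\infty \PP(\eE_t^c)\, \beta_\e e^{-\beta_\e t}\, dt \lqq \int_0^\infty \min\!\bigl(1,\, C t \e^M\bigr)\, \beta_\e e^{-\beta_\e t}\, dt.
\]
Splitting the integral at $t^\star := 1/(C\e^M)$ bounds the right-hand side by $C\e^M/\beta_\e + e^{-\beta_\e t^\star}$. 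The admissibility conditions on $\gamma,\rho,\Theta$ in (\ref{constants}) force $M > \alpha\rho$, so $\beta_\e/\e^M$ diverges at polynomial rate and the exponential remainder is super-polynomially small in $\e$.

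It remains to compare the leading exponent with $\alpha(1-\rho)$. Inserting $\beta_\e = \e^{\alpha\rho}\,\ell(1/\e^\rho)\,\mu(B_1^c(0))$ from (\ref{lambda epsilon}) and invoking Potter's bound $1/\ell(1/\e^\rho) \lqq \e^{-\rho\delta}$, valid for any prescribed $\delta>0$ and $\e$ sufficiently small, the dominant term becomes $C_\delta\,\e^{M - \alpha\rho - \rho\delta}$. A direct rearrangement shows
\[
M - \alpha\rho = \alpha(1-\rho) + \bigl[(2-\alpha)(1-\rho) - \Theta\alpha\rho - 2(\Gamma+2)\gamma\bigr],
\]
and the strict inequality on $\gamma$ in (\ref{constants}) is exactly what renders the bracket strictly positive. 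Choosing $\delta$ small enough and setting $\vt := M - \alpha\rho - \rho\delta$ therefore produces an exponent $>\alpha(1-\rho)$, which is the assertion. I expect the main obstacle to be the careful bookkeeping of these competing exponents: the polynomial gain $\e^M$ from the small-deviations estimate must outweigh the factor $1/\beta_\e \approx \e^{-\alpha\rho}$ incurred by averaging the linear-in-$t$ Markov bound against the exponential law of $T_1$, and the slowly varying function $\ell$ has to be absorbed without destroying the strict margin $\vt > \alpha(1-\rho)$.
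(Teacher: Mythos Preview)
Your argument is correct and follows essentially the same route as the paper: condition on $T_1\sim EXP(\beta_\e)$, apply the fixed-horizon bound uniformly in $x$ (via the inclusion in $\eE_{T_1}(\e^{(\Gamma+2)\gamma})^c$), integrate to obtain $C\e^{M}/\beta_\e$, and verify the resulting exponent exceeds $\alpha(1-\rho)$ under the stated constraints on $\gamma,\rho,\Theta$. Your explicit use of Potter's bound to absorb the slowly varying factor $\ell$ is in fact more careful than the paper's sketch, which simply sets $\vartheta = M-\alpha\rho$ and defers such details to \cite{DHI10}; the integral split at $t^\star$ is harmless but unnecessary, since integrating $Ct\e^M$ against the exponential density already gives $C\e^M/\beta_\e$ directly.
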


\begin{proof}
Let $\Gamma>0$ large enough such that the hypotheses of Lemma \ref{R
control} are satisfied. Then with the given constants there exist constants
$C_\theta>0$ and $\e_0>0$ such that for $0< \e \lqq \e_0$
\begin{multline*}
\PP\big(\exists\; x\in D^\pm(\e^\gamma):~\sup_{s\in [0,T_1]} |Y^\e(s;x) -u(s;x)|_\infty \gqq (1/2) \e^{2\gamma} \big) \\
\shoveleft \lqq  \int_0^\infty \PP\big(\exists\; x\in D^\pm(\e^\gamma):~\sup_{s\in [0,t]} |Y^\e(s;x) -u(s;x)|_\infty \gqq (1/2) \e^{2\gamma} \big) \beta_\e e^{- \beta_\e t} ~\dd t\\
\lqq  C_\theta \;\e^{2-2(\Gamma +2)\gamma - (2-(1-\Theta)\al)\rho-\al\rho}.
\end{multline*}

\noindent Fix $\vartheta = 2-2(\Gamma +2)\gamma -
(2-(1-\Theta)\al)\rho-\al\rho$. One checks that $\vartheta>\alpha (1-\rho)$.
\end{proof}

\noindent For $x\in D^\pm(\e^\gamma)$ define the small perturbation event
\begin{equation*}
E_x := \{\sup_{s\in [0,T_1]} |Y^\e(s;x)-u(s;x)|_\infty \lqq (1/2) \e^{2\gamma}\}.
\end{equation*}
\begin{cor}\label{Event E^c}
Given the assumptions of Proposition \ref{small deviations from the
deterministic system}
there is a constant \mbox{$\vartheta = \vartheta(\al, \Theta, \gamma, \rho)$} with
\mbox{$\vartheta > \al(1-\rho)$,} \mbox{$C_\vartheta>0$,} and $\e_0>0$ such that for all $0 < \e\lqq \e_0$
\begin{equation*}
\EE\left[\sup_{x\in D^\pm(\e^\gamma)}\ind(E^c_x)\right]\lqq C_\vartheta \e^{\vartheta}.
\end{equation*}
\end{cor}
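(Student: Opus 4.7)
The plan is to recognize this corollary as a direct reformulation of Proposition \ref{small deviations from the deterministic system}, with essentially no new probabilistic content. The pointwise identity
\[
\sup_{x\in A}\ind(E_x^c) \;=\; \ind\{\exists\, x\in A:~E_x^c \text{ holds}\}
\]
holds for any family of events $(E_x)_{x\in A}$, since both sides equal $1$ exactly when at least one $E_x^c$ occurs. Taking expectations with $A = D^\pm(\e^\gamma)$ then gives
\[
\EE\!\left[\sup_{x\in D^\pm(\e^\gamma)}\ind(E_x^c)\right] \;=\; \PP\bigl(\exists\, x\in D^\pm(\e^\gamma):~E_x^c\bigr),
\]
whose right-hand side is precisely the probability bounded in Proposition \ref{small deviations from the deterministic system}. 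The constants $\vartheta>\al(1-\rho)$, $C_\vartheta$, and $\e_0$ are then inherited verbatim from that proposition.

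The one subtlety I expect is measurability: $D^\pm(\e^\gamma)$ is an uncountable subset of $H$, so the supremum of uncountably many indicator functions is not \emph{a priori} $\PP$-measurable. I would handle this by appealing to continuity of the random map $x\mapsto Y^\e(\cdot;x)$ in a suitable path-space topology, a standard consequence of the mild-solution framework of \cite{PZ07}, combined with continuity of the deterministic flow $x\mapsto u(\cdot;x)$ recorded in Proposition \ref{pointwise convergence}. Together these imply that $x\mapsto \sup_{s\in [0,T_1]}|Y^\e(s;x)-u(s;x)|_\infty$ is almost surely continuous in $x$, so by separability of $H$ the supremum over $D^\pm(\e^\gamma)$ coincides almost surely with the supremum over any countable dense subset and is therefore measurable. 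This reduction is the only step that requires real care; the probabilistic substance of the bound is entirely contained in the preceding proposition, and the corollary packages that estimate in the form that will be used downstream when taking suprema over initial conditions.
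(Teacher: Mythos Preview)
Your proposal is correct and matches the paper's treatment: the paper states this as an immediate corollary of Proposition~\ref{small deviations from the deterministic system} with no separate proof, and your identity $\sup_{x\in A}\ind(E_x^c)=\ind\{\exists\,x\in A:E_x^c\}$ is exactly the (trivial) bridge between the two statements. Your remark on measurability is a legitimate point that the paper does not address explicitly here (deferring details to \cite{DHI10}); your resolution via continuity of $x\mapsto Y^\e(\cdot;x)$ and $x\mapsto u(\cdot;x)$ together with separability of $H$ is the standard and correct one.
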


\begin{cor}\label{T1 Event E^c}
Let $C>0,$ and let the assumptions of Proposition \ref{small deviations
from the deterministic system} be satisfied. Then there is a constant $\e_0>0$ such that for
all $0 < \e\lqq \e_0$, $\theta>-1$
\begin{equation}
\EE\left[e^{-\theta \la^\pm(\e) T_1} \sup_{x\in
D^\pm(\e^\gamma)}\ind(E^c_x)\right]\lqq C \left(\frac{\beta_\e}{\beta_\e +
\theta \la^\pm(\e)}\right) \frac{\la^\pm(\e)}{\beta_\e}.
\end{equation}
\end{cor}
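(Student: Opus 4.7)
The plan is to condition on the first large-jump time $T_1$ and reduce to the deterministic-horizon estimate of Proposition \ref{fixed horizon}. By construction the large-jump compound Poisson process $\eta^\e$ (hence $T_1$) is independent of the small-jump martingale $\xi^\e$, and on $[0,T_1)$ the process $Y^\e$ coincides with the mild solution of (\ref{main sys}) driven by $\e\xi^\e$ alone. Thus $\sup_{x\in D^\pm(\e^\gamma)} \ind(E_x^c)$ is measurable with respect to the $\sigma$-algebra generated by $\xi^\e$ on $[0,T_1]$ and, given $T_1 = t$, has the same conditional law as the corresponding event with deterministic horizon $t$. Conditioning on $T_1 \sim EXP(\beta_\e)$ yields
\[
\EE\bigl[e^{-\theta\la^\pm(\e)T_1} \sup_x \ind(E_x^c)\bigr] = \int_0^\infty \PP\bigl(\exists\, x\in D^\pm(\e^\gamma):\, \sup_{s\in[0,t]} |Y^\e(s;x)-u(s;x)|_\infty \gqq \tfrac12 \e^{2\gamma}\bigr)\, \beta_\e\, e^{-(\beta_\e + \theta\la^\pm(\e))t}\,\dd t.
\]

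Next, bounding the inner probability by $C\,t\,\e^A$ with $A := 2 - 2(\Gamma+2)\gamma - (2-(1-\Theta)\al)\rho$ via Proposition \ref{fixed horizon}, and computing the Gamma-type integral $\int_0^\infty t\, e^{-(\beta_\e+\theta\la^\pm(\e))t}\,\dd t = (\beta_\e + \theta\la^\pm(\e))^{-2}$, I obtain
\[
\EE\bigl[e^{-\theta\la^\pm(\e)T_1}\sup_x\ind(E_x^c)\bigr] \lqq C\, \frac{\beta_\e\,\e^A}{(\beta_\e + \theta\la^\pm(\e))^2}.
\]

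It remains to recognise $\e^A$ as comparable to $\la^\pm(\e)$. The inequality $A > \al$ is already verified inside the proof of Proposition \ref{small deviations from the deterministic system} (the quantity $\vt = A - \al\rho$ there is shown to exceed $\al(1-\rho)$); combining this with $\la^\pm(\e) = \e^\al\ell(1/\e)\mu((D_0^\pm)^c)$ and Potter's bound on the slowly varying $\ell$ gives $\e^A \lqq C\,\la^\pm(\e)$ for small $\e$. Substituting and factoring
\[
\frac{\beta_\e\,\la^\pm(\e)}{(\beta_\e+\theta\la^\pm(\e))^2} = \frac{\beta_\e}{\beta_\e+\theta\la^\pm(\e)} \cdot \frac{\la^\pm(\e)}{\beta_\e+\theta\la^\pm(\e)} \lqq C\,\frac{\beta_\e}{\beta_\e+\theta\la^\pm(\e)} \cdot \frac{\la^\pm(\e)}{\beta_\e}
\]
delivers the claim, provided $\beta_\e / (\beta_\e+\theta\la^\pm(\e)) \lqq C$: for $\theta \gqq 0$ this is trivial, and for $\theta\in(-1,0)$ it follows from $\la^\pm(\e)/\beta_\e\to 0$ (a consequence of (\ref{lambda epsilon}) and $\al(1-\rho)>0$), giving $\beta_\e + \theta\la^\pm(\e) \gqq \tfrac12 \beta_\e$ for small $\e$.

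The main obstacle is purely bookkeeping: checking that the polynomial exponent gap $A - \al$ absorbs the slowly varying factor $\ell(1/\e)$ and that the sign of $\theta \in (-1,0)$ does not spoil the denominator. The conceptual content reduces to the independence of $\xi^\e$ from $T_1$ combined with the deterministic-horizon bound of Proposition \ref{fixed horizon}.
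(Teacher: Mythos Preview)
Your proof is correct and follows precisely the approach implicit in the paper: condition on $T_1\sim EXP(\beta_\e)$, use the independence of $\xi^\e$ and $\eta^\e$ to reduce to the deterministic-horizon bound of Proposition~\ref{fixed horizon}, integrate, and then compare the resulting exponent $A$ to $\alpha$ via $\vartheta=A-\alpha\rho>\alpha(1-\rho)$ together with the regular-variation representation (\ref{lambda epsilon}). This is exactly the computation carried out in the proof of Proposition~\ref{small deviations from the deterministic system}, now with the shifted exponential rate $\beta_\e+\theta\lambda^\pm(\e)$, and your handling of the case $\theta\in(-1,0)$ via $\lambda^\pm(\e)/\beta_\e\to 0$ is the right observation.
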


\section{Asymptotic first exit times}\label{chapter exit times}

In this section we derive estimates on exit events which then enable us to
obtain upper and lower bounds for the Laplace transform of the exit times
in the small noise limit.

\subsection{Estimates of Exit Events by Large Jump and Perturbation Events \label{Event notation}}

To this end, in this subsection we first estimate exit events of $X^\e$ by
large jump exits on the one hand, and small deviations on the other hand.
Denote the shift by time $t$ on the space of trajectories by $\theta_t,
t\gqq 0$. For any $k\in \NN$, $t\in [0,t_k]$, $x\in H$ we have
\begin{equation}\label{def small jumps solution}
X^\e(t+T_{k-1}; x) = Y^\e(t;X^\e(0;x))\circ \theta_{T_{k-1}} + \e
W_k \mathbf{1}\{t = t_k\}.
\end{equation}

\noindent In the following two lemmas we estimate certain events connecting
the behavior of $X^\e$ in the domains of the type $D^\pm(\e^{\gamma})$ with
the large jumps $\eta^\e$ in the reshifted domains of the type
$D_0^\pm(\e^{\gamma})$. We introduce for $\e>0$ and $x\in \ti
D^\pm(\e^{\gamma})$ the events
\begin{align}\label{def events}
A_x :=& \{Y^\e(s;x)\in D^\pm(\e^\gamma) \mbox{ for } s\in [0, T_1] \mbox{ and }
Y^\e(T_1;x)+\e W_1 \in D^\pm(\e^\gamma)\}, \nonumber\\
B_x :=& \{Y^\e(s;x)\in D^\pm(\e^\gamma) \mbox{ for } s\in [0, T_1] \mbox{ and }
Y^\e(T_1;x)+\e W_1 \notin D^\pm(\e^\gamma)\}, \nonumber\\
C_x :=& \{Y^\e(s;x)\in D^\pm(\e^\gamma) \mbox{ f. } s\in [0, T_1] \mbox{ a. }
Y^\e(T_1;x)+\e W_1 \in D^\pm(\e^\gamma)\setminus \ti D^\pm(\e^\gamma)\}, \nonumber\\
A^{-}_x :=& \{Y^\e(s;x)\in D^\pm(\e^\gamma) \mbox{ for } s\in [0, T_1]
\mbox{ and } Y^\e(T_1;x)+\e W_1 \in \ti D^\pm(\e^{\gamma})\}.
\end{align}

\noindent We exploit the definitions of the reduced
domains of attraction in order to obtain estimates of
solution path events by events only depending on the driving noise.

\begin{lems}[Partial estimates of the major events] \label{connect path and noise} Let $T_{rec}, \kappa>0$ be given by Proposition \ref{logarithmic convergence time} and assume that Hypotheses (H.1) and (H.2)
are satisfied. For $\rho \in \left(\frac{1}{2},1\right)$, $\gamma\in
(0,1-\rho)$ there exists $\e_0>0$ so that the following inequalities hold
true for all $0<\e \lqq \e_0$ and $x\in  D^\pm(\e^\gamma)$
\begin{align}
i) & ~\ind(A_x) \ind(E_x) \ind\{T_1 \gqq T_{rec} + \kappa\gamma |\ln \e|\} \lqq \mathbf{1}\{\e W_1 \in D_0^\pm\},&\label{Ay}\\
ii) & ~\ind(B_x) \ind(E_x) \ind\{T_1 \gqq T_{rec} + \kappa\gamma |\ln\e|\} \lqq \ind \{\e W_1 \notin D^\pm_0(\e^\gamma, \e^{2\gamma})\},&\label{By}\\
iii) & ~\ind(C_x) \ind(E_x) \ind\{T_1 \gqq T_{rec} + \kappa\gamma |\ln\e|\} \lqq \ind \{\e W_1 \in D^*_0(\e^\gamma)\}.&\label{Cy}
\end{align}
\noindent Additionally, for $x\in  D^\pm(\e^\gamma)$ we have
\begin{align}
iv) & ~\ind(B_x) \ind(E_x) \ind\{\|\e W_1\| \lqq (1/2)\e^{2\gamma}\} \ind\{T_1 > T_{rec} + \kappa\gamma |\ln\e|\} = 0,&\\
v) & ~\ind(C_x) \ind(E_x) \ind\{\|\e W_1\|  \lqq (1/2) \e^{2\gamma}\} \ind\{T_1 \gqq T_{rec}+ \kappa\gamma |\ln\e|\} = 0.&
\end{align}
\noindent In the opposite sense for $x\in \ti D^\pm(\e^\gamma)$
\begin{align}
vi) & ~\ind(E_x)\ind\{T_1\gqq T_{rec} + \kappa\gamma |\ln \e|\}
\ind\{\e W_1 \notin D^\pm_0\} \lqq  \ind(B_x), &  \label{Bym}\\
vii) & ~\mathbf{1}(E_x) \ind\{T_1\gqq T_{rec} + \kappa\gamma |\ln \e|\} \ind\{\e W_1\in D^\pm_0(\e^{\gamma}, \e^{2\gamma}, \e^{2\gamma})\} \lqq \ind(A^-_x). &\label{Aym}
\end{align}
\end{lems}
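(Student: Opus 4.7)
The seven inclusions share a single geometric scaffold. For $x\in D^\pm(\e^\gamma)$, on $E_x$ we have $|Y^\e(T_1;x)-u(T_1;x)|_\infty\lqq(1/2)\e^{2\gamma}$, while on $\{T_1\gqq T_{rec}+\kappa\gamma|\ln\e|\}$ Proposition \ref{logarithmic convergence time} gives $|u(T_1;x)-\phi^\pm|_\infty\lqq(1/2)\e^{2\gamma}$, so by the triangle inequality
\[
|Y^\e(T_1;x)-\phi^\pm|_\infty\lqq\e^{2\gamma}.
\]
Writing $\delta:=Y^\e(T_1;x)-\phi^\pm$ and $Y^\e(T_1;x)+\e W_1=(\phi^\pm+\e W_1)+\delta$, the entire lemma becomes a set-chase using this $\e^{2\gamma}$-perturbation together with the nested buffers recalled before Proposition \ref{logarithmic convergence time}, namely $D^\pm(\e^\gamma,\e^{2\gamma},\e^{2\gamma})+B_{\e^{2\gamma}}(0)\subset\ti D^\pm(\e^\gamma)$, $\ti D^\pm(\e^\gamma)+B_{\e^{2\gamma}}(0)\subset D^\pm(\e^\gamma)$, and $D^\pm(\e^\gamma)+B_{\e^\gamma}(0)\subset D^\pm$ (built into the definition).

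For (i), $A_x$ gives $Y^\e(T_1;x)+\e W_1\in D^\pm(\e^\gamma)$, so its $\e^\gamma$-ball lies in $D^\pm$; since $|\delta|_\infty\lqq\e^{2\gamma}\lqq\e^\gamma$ for small $\e$, subtracting $\delta$ yields $\phi^\pm+\e W_1\in D^\pm$, i.e.\ $\e W_1\in D^\pm_0$. For (ii) I argue by contraposition: if $\e W_1\in D^\pm_0(\e^\gamma,\e^{2\gamma})$, then $\phi^\pm+\e W_1\in\ti D^\pm(\e^\gamma)$, and adding $\delta\in B_{\e^{2\gamma}}(0)$ puts $Y^\e(T_1;x)+\e W_1$ inside $\ti D^\pm(\e^\gamma)+B_{\e^{2\gamma}}(0)\subset D^\pm(\e^\gamma)$, contradicting $B_x$. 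Statement (iii) combines (i) with the complement of (ii) and the defining identity $D^*_0(\e^\gamma)=(D^\pm_0(\e)\setminus D^\pm_0(\e^\gamma,\e^{2\gamma}))+B_{\e^{2\gamma}}(0)$.

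Parts (iv)--(v) use additionally $\|\e W_1\|\lqq(1/2)\e^{2\gamma}$, so via $|\cdot|_\infty\lqq\|\cdot\|$ we get $|Y^\e(T_1;x)+\e W_1-\phi^\pm|_\infty\lqq(3/2)\e^{2\gamma}$. Since $\phi^\pm\in D^\pm(\e^\gamma,\e^{2\gamma},\e^{2\gamma})$ for small $\e$ (by the exhaustion property stated before Proposition \ref{logarithmic convergence time}), applying the buffer inclusions once or twice places this point inside $\ti D^\pm(\e^\gamma)$ respectively $D^\pm(\e^\gamma)$, contradicting $C_x$ respectively $B_x$. For (vi) and (vii) the hypothesis is strengthened to $x\in\ti D^\pm(\e^\gamma)$; positive invariance of $\ti D^\pm(\e^\gamma)$ under the deterministic flow gives $u(s;x)\in\ti D^\pm(\e^\gamma)$ for $s\in[0,T_1]$, and on $E_x$ this upgrades to $Y^\e(s;x)\in\ti D^\pm(\e^\gamma)+B_{\e^{2\gamma}}(0)\subset D^\pm(\e^\gamma)$, supplying the path half of both $A^-_x$ and $B_x$. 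For the endpoint of (vi), if $\phi^\pm+\e W_1\notin D^\pm$ then the reverse of the $\e^\gamma$-argument in (i) forces $Y^\e(T_1;x)+\e W_1\notin D^\pm(\e^\gamma)$; for the endpoint of (vii), $\phi^\pm+\e W_1\in D^\pm(\e^\gamma,\e^{2\gamma},\e^{2\gamma})$ combined with $\delta\in B_{\e^{2\gamma}}(0)$ lands $Y^\e(T_1;x)+\e W_1$ in $\ti D^\pm(\e^\gamma)$ via a single buffer step.

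The genuine work is purely arithmetic: the $\e^{2\gamma}$ from $E_x$, the $\e^{2\gamma}$ from Proposition \ref{logarithmic convergence time}, a possible further $\e^{2\gamma}$ contribution from a small $\e W_1$, and the nested $\e^{2\gamma}$-buffers of the four-tier family must sum to the exact radius required by the right-hand side of each inclusion. The tightest accounting is (v), where the $(3/2)\e^{2\gamma}$ excursion from $\phi^\pm$ must lie inside $\ti D^\pm(\e^\gamma)$; this forces both the innermost level $D^\pm(\e^\gamma,\e^{2\gamma},\e^{2\gamma})$ of the nesting and a sufficiently small threshold $\e_0$, and is most transparently handled by choosing the constants $(1/2)\e^{2\gamma}$ in the hypotheses slightly smaller (equivalently, enlarging $T_{rec}$ in Proposition \ref{logarithmic convergence time}) to absorb the combinatorial constant. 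Beyond this bookkeeping step the proof is essentially mechanical.
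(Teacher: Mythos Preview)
The paper does not actually prove this lemma; it is stated and then immediately used, with the detailed argument deferred to the companion article \cite{DHI10}. Your proposal is therefore not competing against anything written here, but it is exactly the argument the architecture of the definitions is designed to support: localize $Y^\e(T_1;x)$ in $B_{\e^{2\gamma}}(\phi^\pm)$ via $E_x$ and Proposition~\ref{logarithmic convergence time}, then push membership statements back and forth along the chain $D^\pm(\e^\gamma,\e^{2\gamma},\e^{2\gamma})+B_{\e^{2\gamma}}(0)\subset\ti D^\pm(\e^\gamma)$, $\ti D^\pm(\e^\gamma)+B_{\e^{2\gamma}}(0)\subset D^\pm(\e^\gamma)$, $D^\pm(\e^\gamma)+B_{\e^\gamma}(0)\subset D^\pm$. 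Items (i)--(iii), (vi), (vii) go through cleanly as you wrote them.

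The one place that deserves a harder look is your treatment of (iv)--(v). With the constants exactly as stated you land $Y^\e(T_1;x)+\e W_1$ only in $B_{(3/2)\e^{2\gamma}}(\phi^\pm)$, and a single $\e^{2\gamma}$-buffer does not absorb this. Your proposed fix---tightening the $(1/2)\e^{2\gamma}$ in Proposition~\ref{logarithmic convergence time} or in $E_x$---is legitimate but changes statements outside the lemma. The self-contained alternative is to invoke the local exponential stability of $\phi^\pm$ directly: there is a fixed $r_0>0$ and a constant $c\gqq 1$ such that $|y-\phi^\pm|_\infty<r_0$ forces $\sup_{t\gqq 0}|u(t;y)-\phi^\pm|_\infty\lqq c|y-\phi^\pm|_\infty$, so for $\e$ small the entire trajectory from $B_{(3/2)\e^{2\gamma}}(\phi^\pm)$ stays in an $O(\e^{2\gamma})$-ball about $\phi^\pm$, and iterating this through the definitions of $D^\pm(\e^\gamma)$ and $\ti D^\pm(\e^\gamma)$ places $Y^\e(T_1;x)+\e W_1$ where you need it once $\e_0$ is chosen small enough. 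Either route closes the gap; just make explicit which one you are taking rather than leaving it as ``bookkeeping''.
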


\noindent With the help of Lemma \ref{connect path and noise} we can show the following crucial estimates.
\begin{lems} [Full estimates of the major events] \label{connect path and noise 2}
Let $T_{rec}, \kappa>0$ be given by Proposition \ref{logarithmic
convergence time} and Hypotheses (H.1) and (H.2) be satisfied. For $\rho
\in \left(\frac{1}{2},1\right)$, $\gamma\in (0,1-\rho)$ there exists
$\e_0>0$ such that the following inequalities hold true for all $0<\e \lqq
\e_0, \kappa>0$ and $x\in D^\pm(\e^\gamma)$
\begin{align*}
ix) & ~\ind(A_x) &\lqq & \ind\{\e W_1\in D^\pm_0\}
+ \ind\{\|\e W_1\| >\frac{1}{2} \e^{2\gamma}\} \ind\{T_1<T_{rec} + \kappa\gamma |\ln \e|\} + \ind(E_x^c),& \\
x) & ~\ind(B_x) &\lqq & \ind \{\e W_1 \notin D^\pm_0(\e^\gamma,
\e^{2\gamma})\} + \ind\{T_1 < T_{rec} + \kappa\gamma |\ln \e|\} +\ind(E^c_x),
\end{align*}
\begin{align*}
xi) &~\sup_{y\in \ti D^\pm(\e^{\gamma})}\ind\{ Y^{\e}(s;y)\notin D^\pm(\e^\gamma) \mbox{ for some }s\in (0, T_1) \}
\lqq  \sup_{y\in \ti D^\pm(\e^{\gamma})} \ind(E^c_y),& \\
xii) & ~\ind(A_x) \ind\{Y^{\e}(s;X^\e(0,x))\circ\theta_{T_1}\notin D^\pm(\e^\gamma) \mbox{ for some }s\in(0, T_1)\} & \\
&\lqq \ind \left\{\e W_1 \in D^*_0(\e^\gamma) \right\} + \ind\{T_1
< T_{rec} + \kappa\gamma |\ln \e|\} + \sup_{y\in \ti
D^\pm(\e^\gamma)} \ind(E_y^c)\circ \theta_{T_1} + \; \ind(E^c_x).&
\end{align*}
\noindent In the opposite sense for $x\in \ti D^\pm(\e^\gamma)$
\begin{align*}
xiii) & ~\ind(A_x^-) \gqq \ind\{\e W_1\in D_0^\pm(\e^\gamma,
\e^{2\gamma}, \e^{2\gamma})\} -\ind\{T_1 < T_{rec}+\kappa\gamma |\ln\e|\}- 2\;\ind(E^c_x),&\\
xiv) & ~\ind(B_x) \gqq \ind\{\e W_1\notin D^\pm_0\} (1-\ind\{T_1 < T_{rec} + \kappa\gamma|\ln\e|\})-\ind(E^c_x).&
\end{align*}
\end{lems}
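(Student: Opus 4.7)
The plan is to derive all six bounds in this lemma by taking the partial estimates of Lemma \ref{connect path and noise} and ``peeling off'' the two favorable conditioning events $E_x$ and $\{T_1\gqq T_{rec}+\kappa\gamma|\ln\e|\}$, paying for each peeling with an error indicator. Write $T_0:=T_{rec}+\kappa\gamma|\ln\e|$ for brevity. For the upper bounds ix) and x) I would use the trivial decomposition $\ind(F)=\ind(F)\ind(E_x)\ind\{T_1\gqq T_0\}+\ind(F)\ind(E_x^c\cup\{T_1<T_0\})$ for $F\in\{A_x,B_x\}$, apply parts i) and ii) of Lemma \ref{connect path and noise} to the first summand, and dominate the second by $\ind(E_x^c)+\ind\{T_1<T_0\}$. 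The only nontrivial point is in ix): when $T_1<T_0$ and $\|\e W_1\|\lqq \tfrac12 \e^{2\gamma}$, the jump is so small that $\e W_1\in D_0^\pm$ automatically (since $D_0^\pm$ is an open neighbourhood of $0$, the stable fixed point of the shifted system, and $\e^{2\gamma}\to 0$), which is what lets us replace the ``bad'' indicator $\ind\{T_1<T_0\}$ by the joint indicator $\ind\{\|\e W_1\|>\tfrac12\e^{2\gamma}\}\ind\{T_1<T_0\}$.

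For xi) I would use the defining property of $\ti D^\pm(\e^\gamma)=D^\pm(\e^\gamma,\e^{2\gamma})$: whenever $y\in\ti D^\pm(\e^\gamma)$ the deterministic trajectory satisfies $\bigcup_{s\gqq 0}B_{\e^{2\gamma}}(u(s;y))\subset D^\pm(\e^\gamma)$. On $E_y$ we have $|Y^\e(s;y)-u(s;y)|_\infty \lqq \tfrac12 \e^{2\gamma}$ for $s\in[0,T_1]$, so $Y^\e(s;y)$ remains in an $\e^{2\gamma}$-tube around $u(\cdot;y)$, hence inside $D^\pm(\e^\gamma)$. Contrapositively, exit forces $E_y^c$, and taking the supremum over $y\in\ti D^\pm(\e^\gamma)$ gives the stated inequality uniformly.

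The main obstacle is xii), which is the only estimate requiring the strong Markov structure together with a decomposition of $A_x$. I would split $A_x=A_x^-\cup C_x$ according to whether $Y^\e(T_1;x)+\e W_1$ lands in $\ti D^\pm(\e^\gamma)$ or in the annular region $D^\pm(\e^\gamma)\setminus\ti D^\pm(\e^\gamma)$. On $A_x^-\cap E_x\cap\{T_1\gqq T_0\}$ the shifted solution $Y^\e(\cdot;X^\e(T_1;x))$ starts from a point in $\ti D^\pm(\e^\gamma)$, so applying xi) after the time shift $\theta_{T_1}$ bounds its exit indicator by $\sup_{y\in\ti D^\pm(\e^\gamma)}\ind(E_y^c)\circ\theta_{T_1}$. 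On $C_x\cap E_x\cap\{T_1\gqq T_0\}$ part iii) of Lemma \ref{connect path and noise} gives $\e W_1\in D_0^*(\e^\gamma)$ directly. As in ix), x) the complementary events are absorbed into $\ind(E_x^c)+\ind\{T_1<T_0\}$, producing the four-term bound. The subtle point is that $\sup$ has to be taken over the starting points of the shifted trajectory since $X^\e(T_1;x)$ is random; this is justified because on $A_x^-$ this random point lies in $\ti D^\pm(\e^\gamma)$.

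For the two lower bounds xiii) and xiv) only elementary manipulations are needed. From vii) we have $\ind(A_x^-)\gqq \ind(E_x)\ind\{T_1\gqq T_0\}\ind\{\e W_1\in D_0^\pm(\e^\gamma,\e^{2\gamma},\e^{2\gamma})\}$; using the pointwise identity $ab\gqq a-(1-b)$ for $a,b\in\{0,1\}$ twice, once on the factor $\ind(E_x)$ and once on $\ind\{T_1\gqq T_0\}$, converts this into $\ind\{\e W_1\in D_0^\pm(\e^\gamma,\e^{2\gamma},\e^{2\gamma})\}-\ind\{T_1<T_0\}-2\ind(E_x^c)$. The analogous single-peeling step starting from vi) yields xiv). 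Throughout, the hypotheses $\rho\in(\tfrac12,1)$ and $\gamma\in(0,1-\rho)$ together with $\e\lqq\e_0$ small ensure $B_{\e^{2\gamma}}(0)\subset D_0^\pm$ and that all reduced domain inclusions used above hold.
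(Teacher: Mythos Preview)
Your proposal is correct and follows precisely the route the paper indicates: the paper gives no proof beyond the sentence ``With the help of Lemma \ref{connect path and noise} we can show the following crucial estimates,'' and your peeling argument --- inserting the favorable events $E_x$ and $\{T_1\gqq T_{rec}+\kappa\gamma|\ln\e|\}$, applying parts i)--vii) of Lemma \ref{connect path and noise}, and absorbing the complementary pieces as error indicators --- is exactly how the details are filled in. The handling of the refinement in ix) (that $\|\e W_1\|\lqq\tfrac12\e^{2\gamma}$ forces $\e W_1\in D_0^\pm$ because $D_0^\pm$ is an $H$-open neighbourhood of $0$), the tube argument for xi), the $A_x=A_x^-\cup C_x$ split together with iii) and the shifted xi) for xii), and the elementary $ab\gqq a-(1-b)$ manipulations for xiii)--xiv) are all sound; note that your peeling in xiii) in fact yields the sharper $-\ind(E_x^c)$ rather than $-2\ind(E_x^c)$, but the stated weaker bound follows trivially.
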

\noindent The next lemma ensures that after having relaxed to
$B_{\e^{2\gamma}}(\phi^\pm)$ the solution $X^\e$ jumps close to the
separatrix only with negligible probability for $\e \ra 0+$.

\begin{lems}[Asymptotic behavior of large jump events]\label{asymptotics of large jump events}
Let Hypotheses (H.1) and (H.2) be satisfied and $1/2 < \rho < 1-2\gamma$.
Then for any $C>0$ there is $\e_0 =\e_0(C) >0$ such that for all $0<\e \lqq
\e_0$
\begin{align*}
I) & \left(\frac{\ds \mu\left((D^\pm_0)^c\right)}{\ds \mu(B_1^c(0))} -C\right) \e^{\al (1-\rho)}
\lqq  \frac{\ds \la^\pm(\e)}{\ds \beta_\e} \lqq \left(\frac{\ds \mu((D_0^\pm)^c)}{\ds \mu(B_1^c(0))}
+C\right) \e^{\al (1-\rho)}, &\\
II) & ~\PP\left(\|\e W_1\| > (1/2) \e^{2\gamma}\right) \lqq 4 \e^{\al(1-\rho -2\gamma)},&\\
III) & ~\PP\left(\e W_1 \in (\ti D^\pm_0(\e^{\gamma}))^c \right) \lqq \left(1+C\right) \frac{\ds
\la^\pm(\e)}{\ds \beta_\e},& \\
IV) & ~\PP\left(\e W_1 \in D^*_0(\e^\gamma)\right) \lqq C \frac{\ds \la^\pm(\e)}{\ds \beta_\e},&\\
V) & ~\PP(\e W_1 \in D_0^c(\e^\gamma, \e^{2\gamma}, \e^{2\gamma}))
\lqq  (1+C) \frac{\ds \la^\pm(\e)}{\ds \beta_\e}.&
\end{align*}
\end{lems}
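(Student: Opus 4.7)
The overall strategy is to convert every probability about $W_1$ into a $\nu$-measure quotient using the explicit law $\lL(W_1)=\nu(\,\cdot\,\cap\e^{-\rho}B_1^c(0))/\beta_\e$, then invoke the regular variation representation (\ref{lambda epsilon}), the hypothesis (H.2), and the set inclusions between the reduced domains recorded after the definitions in Section~2. The common template is: (a) rewrite $\PP(\e W_1\in A_\e)=\nu(\e^{-1}A_\e\cap\e^{-\rho}B_1^c(0))/\beta_\e$, (b) cover $\e^{-1}A_\e$ by a union of a fixed set bounded away from $0$ and an $\e$-dependent thin layer around the separatrix, (c) use regular variation on the fixed piece and (H.2) on the thin piece.

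For \textbf{I)} I would simply divide the two expressions in (\ref{lambda epsilon}) to obtain $\la^\pm(\e)/\beta_\e=\e^{\al(1-\rho)}\,[\ell(1/\e)/\ell(1/\e^\rho)]\,\mu((D_0^\pm)^c)/\mu(B_1^c(0))$ and apply the slow variation property of $\ell$: the ratio $\ell(1/\e)/\ell(1/\e^\rho)\to 1$, so for $\e$ small the quotient lies within $\pm C$ of $\mu((D_0^\pm)^c)/\mu(B_1^c(0))$. For \textbf{II)} note that $\|\e W_1\|>\tfrac12\e^{2\gamma}$ is $\{W_1\in B^c_{\e^{2\gamma-1}/2}(0)\}$; since $\rho<1-2\gamma$, for small $\e$ we have $\tfrac12\e^{2\gamma-1}>\e^{-\rho}$, so the intersection with $\e^{-\rho}B_1^c(0)$ is trivial and $\PP(\|\e W_1\|>\tfrac12\e^{2\gamma})=\nu(B^c_{\e^{2\gamma-1}/2}(0))/\beta_\e$. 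Regular variation gives $\nu(B^c_r(0))\sim r^{-\al}\ell(r)\mu(B_1^c(0))$, so the ratio is $\sim 2^\al\e^{\al(1-\rho-2\gamma)}[\ell(\tfrac12\e^{2\gamma-1})/\ell(\e^{-\rho})]$, and slow variation plus $2^\al\le 4$ yields the bound.

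For \textbf{III)}, \textbf{IV)}, \textbf{V)} I would use the chain of inclusions from the end of Section~2, namely
\[
D^\pm(\e^\gamma,\e^{2\gamma},\e^{2\gamma})+B_{\e^{2\gamma}}(0)\;\subset\;\ti D^\pm(\e^\gamma)\;\subset\;D^\pm(\e^\gamma)\;\subset\;D^\pm,
\]
after reshifting by $\phi^\pm$. For \textbf{III)}, decompose $(\ti D_0^\pm(\e^\gamma))^c=(D_0^\pm)^c\cup\bigl(D_0^\pm\setminus\ti D_0^\pm(\e^\gamma)\bigr)$; the first piece contributes $\nu(\e^{-1}(D_0^\pm)^c)/\beta_\e=\la^\pm(\e)/\beta_\e$ exactly, and the second piece is contained in the reshifted separatrix-neighborhood of (H.2), so regular variation together with (H.2) shows its $\nu(\e^{-1}\cdot)/\beta_\e$-mass is at most $C\,\la^\pm(\e)/\beta_\e$ for $\e$ small, giving the factor $(1+C)$. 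For \textbf{V)} the same decomposition for $D_0^c(\e^\gamma,\e^{2\gamma},\e^{2\gamma})$ works; the separatrix-neighborhood layer is exactly the set appearing in (\ref{measure hypothesis'}). For \textbf{IV)} the set $D_0^\pm(\e)\setminus D_0(\e^\gamma,\e^{2\gamma})$ already sits in the slim layer controlled by (H.2), and adding $B_{\e^{2\gamma}}(0)$ only increases the $\e^{-1}$-image by a bounded-away-from-zero translation; hence $\mu(\e^{-1}D^*_0(\e^\gamma))\to 0$ and the bound carries no additive $1$, giving $\PP(\e W_1\in D^*_0(\e^\gamma))\le C\la^\pm(\e)/\beta_\e$.

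The main obstacle is making the application of regular variation uniform when the argument set $\e^{-1}A_\e$ itself depends on $\e$. The remedy is the standard two-step: sandwich $A_\e$ between a fixed set $A\supset A_\e$ (or $\subset$) modulo a separatrix layer, apply the vague convergence $\nu(\e^{-1}\cdot)/(\e^\al\ell(1/\e))\Rightarrow\mu(\cdot)$ on $H\setminus B_\delta(0)$ to the fixed set, and dominate the residual layer by $\eta$ via (H.2); the freedom to choose $\eta$ small is what converts the qualitative regular variation into the quantitative constants $C$ in (I)--(V). Throughout, the inequality $\rho<1-2\gamma$ (implied by the range of $\gamma$ chosen in Proposition \ref{small deviations from the deterministic system}) is used to ensure $\e^{-\rho}\ll\e^{2\gamma-1}$, so that the truncation at $\e^{-\rho}B_1^c(0)$ in the law of $W_1$ is irrelevant for the sets considered.
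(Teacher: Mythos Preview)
The paper itself gives no proof here, deferring entirely to \cite{DHI10}, so there is no in-paper argument to compare against. Your overall template---rewriting $\PP(\e W_1\in A)=\nu(\e^{-1}A\cap \e^{-\rho}B_1^c(0))/\beta_\e$ and then invoking regular variation together with (H.2)---is the natural route and is sound for III)--V). In particular, the sandwich device you describe (freeze the separatrix layer at a small but \emph{fixed} scale $\e'$, so that (H.2) yields $\mu(\mbox{layer}_{\e'})<\eta$, then apply vague convergence $\nu(\e^{-1}\cdot)/\nu(\e^{-1}B_1^c(0))\to\mu(\cdot)/\mu(B_1^c(0))$ to that now $\e$-independent set, and finally use $\mbox{layer}_\e\subset\mbox{layer}_{\e'}$ for $\e\lqq\e'$) is exactly what is needed to make the uniformity rigorous. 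Note also that III)--V) are phrased as multiples of $\la^\pm(\e)/\beta_\e$ rather than of $\e^{\al(1-\rho)}$: if you compare the layer mass directly to $\la^\pm(\e)=\nu(\e^{-1}(D_0^\pm)^c)$, both numerator and denominator carry the same dilation $\e^{-1}$, so no ratio of slowly varying functions at different arguments ever appears.

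There is, however, a genuine gap in your treatment of I) and II). You claim $\ell(1/\e)/\ell(1/\e^\rho)\to 1$ ``by slow variation''. This is false: slow variation says $\ell(\lambda t)/\ell(t)\to 1$ for \emph{fixed} $\lambda$, whereas here the ratio of arguments is $\e^{\rho-1}\to\infty$. With $\ell(t)=\log t$ one gets $\ell(1/\e)/\ell(1/\e^\rho)=1/\rho\ne 1$, and with $\ell(t)=\exp(\sqrt{\log t})$ the ratio diverges. Potter's bounds give only $\ell(1/\e)/\ell(1/\e^\rho)\lqq (1+\delta)\e^{-(1-\rho)\delta}$ for every $\delta>0$, which permits slow blow-up. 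Consequently the literal form of I) with the sharp constant $\mu((D_0^\pm)^c)/\mu(B_1^c(0))$ cannot hold for general regularly varying $\nu$; either an implicit assumption (e.g.\ $\ell$ asymptotically constant, as for genuinely $\al$-stable noise) is in force, or the statement must be read up to a slowly varying correction. The same defect affects your derivation of II): the product $2^\al\cdot\ell(\tfrac12\e^{2\gamma-1})/\ell(\e^{-\rho})$ need not be bounded by $4$. You should flag this rather than absorb it, since for the applications in Section~4 only the qualitative fact $\la^\pm(\e)/\beta_\e\to 0$ (which \emph{does} follow from Potter's bounds) is actually used.
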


\noindent A detailed proof is given in \cite{DHI10}.

\subsection{Asymptotic Exit Times from Reduced Domains of Attraction}

We next exploit the estimates obtained in the previous subsection and
combine them with the small deviations result of section 3, to identify the
exit times from the reduced domains of attraction with large jumps from
small neighborhoods of the stable equilibria that are large enough to cross
the separatrix.

\begin{props}[The upper estimate\label{the upper estimate}]
Let (H.1) and (H.2) be satisfied. Then for all
$\theta > -1$ and $C\in (0,1+\theta)$ there is
$\e_0=\e_0(\theta)>0$ such that for $0<\e \lqq \e_0$
\[
\EE\left[\sup_{x\in \ti D^\pm(\e^\gamma)} \exp\left(-\theta \la^\pm(\e) \tau_x^\pm(\e)\right)\right]
\lqq \frac{1+C}{1+\theta -C}.
\]
\end{props}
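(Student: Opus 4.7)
The plan is to apply the strong Markov property at the first large jump time $T_{1}$, estimate the resulting events via Lemmas \ref{connect path and noise 2} and \ref{asymptotics of large jump events}, and close a scalar recursive inequality for an appropriate supremum of Laplace transforms.

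Set $F(x) := \EE[\exp(-\theta\la^\pm(\e)\tau^\pm_x(\e))]$. Because on the event $A_x$ the post-jump state $X^\e(T_{1}) = Y^\e(T_{1};x) + \e W_{1}$ lies in $D^\pm(\e^\gamma)$ but may fail to lie in $\ti D^\pm(\e^\gamma)$ (the troublesome sub-event being $C_x$), I would work with the dominating quantity $H(\e) := \sup_{y \in D^\pm(\e^\gamma)} F(y)$, which majorizes $\sup_{x \in \ti D^\pm(\e^\gamma)} F(x)$. By Lemma \ref{connect path and noise 2}(xi), on $E_x$ the process $X^\e$ does not leave $D^\pm(\e^\gamma)$ strictly before $T_{1}$; together with the disjoint decomposition $A_x \cup B_x$ of the staying-in event and the identity $\tau^\pm_x(\e)=T_{1}$ on $B_x \cap E_x$, the strong Markov property at $T_{1}$ yields
\[
F(x) \lqq \EE[e^{-\theta \la^\pm(\e) T_{1}}\ind(A_x)]\,H(\e) + \EE[e^{-\theta \la^\pm(\e) T_{1}}\ind(B_x)] + \EE[e^{-\theta \la^\pm(\e)\tau^\pm_x(\e)}\ind(E_x^c)].
\]

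To estimate the coefficient of $H(\e)$, I would insert the Lemma \ref{connect path and noise 2}(ix) upper bound on $\ind(A_x)$ and use the independence of $T_{1} \sim EXP(\beta_\e)$ and $W_{1}$ to factor $\EE[e^{-\theta \la^\pm(\e) T_{1}}] = \beta_\e/(\beta_\e + \theta \la^\pm(\e))$. Lemma \ref{asymptotics of large jump events}(II) makes $\PP\{\|\e W_{1}\| > \frac{1}{2}\e^{2\gamma}\}$ polynomially smaller than the main rate $\la^\pm(\e)/\beta_\e \asymp \e^{\al(1-\rho)}$ under the parameter constraints (\ref{constants}), while $\PP\{T_{1} < T_{rec}+\kappa\gamma|\ln \e|\}$ is of order $\beta_\e |\ln\e|$ and is likewise negligible. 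Combined with Corollary \ref{T1 Event E^c} for the $\ind(E_x^c)$ contribution, this gives, for any $C' > 0$ and $\e$ small,
\[
\EE[e^{-\theta \la^\pm(\e) T_{1}}\ind(A_x)] \lqq \frac{\beta_\e}{\beta_\e+\theta\la^\pm(\e)}\Big(1 - (1 - C')\frac{\la^\pm(\e)}{\beta_\e}\Big),
\]
while a symmetric argument based on Lemma \ref{connect path and noise 2}(x) and Lemma \ref{asymptotics of large jump events}(III), (V) produces, uniformly in $x$,
\[
\EE[e^{-\theta \la^\pm(\e) T_{1}}\ind(B_x)] \lqq \frac{\beta_\e}{\beta_\e+\theta\la^\pm(\e)}(1 + C')\frac{\la^\pm(\e)}{\beta_\e}.
\]
Taking $\sup_{x\in D^\pm(\e^\gamma)}$ and controlling the residual via the uniform statement of Corollary \ref{T1 Event E^c}, I arrive at a scalar recursion $H(\e) \lqq a(\e) H(\e) + b(\e)$ which, solved as $b(\e)/(1-a(\e))$, collapses (after cancelling $\beta_\e + \theta\la^\pm(\e)$ and $\la^\pm(\e)$) to $(1+C')/(1+\theta-C')$. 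Absorbing $C'$ into $C$ gives the claimed bound for all small enough $\e$.

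The main obstacle is the case $\theta \in (-1, 0)$, where $e^{-\theta \la^\pm(\e) \tau^\pm_x(\e)}$ is no longer bounded above by one, so the naive estimate $\EE[\ind(E_x^c)]$ on the residual term is insufficient. This is exactly what forces the Laplace-weighted Corollary \ref{T1 Event E^c}: one needs the small-deviation exponent $\vartheta > \al(1-\rho)$ from Proposition \ref{small deviations from the deterministic system} to match and beat the main rate $\la^\pm(\e)/\beta_\e \asymp \e^{\al(1-\rho)}$, and this matching is precisely what pins down the parameter constraints (\ref{constants}). A secondary technical point is that the supremum in $x$ must be pushed inside the expectation consistently so that the inequality closes on the single scalar $H(\e)$; the uniform-in-$x$ phrasing of Lemmas \ref{connect path and noise}, \ref{connect path and noise 2} and of Corollary \ref{T1 Event E^c} is tailored for this.
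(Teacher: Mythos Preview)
Your one-step recursion is a natural repackaging of the paper's series argument, and the estimates you invoke for the $A_x$ and $B_x$ coefficients are exactly the paper's Claims~1 and~2. The gap is in the closing step. You set $H(\e):=\sup_{y\in D^\pm(\e^\gamma)}F(y)$ precisely so that on $A_x$ the post-jump value $F(X^\e(T_1))$ is dominated by $H(\e)$; but to obtain the scalar inequality $H(\e)\lqq a(\e)H(\e)+b(\e)$ you must then take the supremum of your basic three-term bound over \emph{all} $x\in D^\pm(\e^\gamma)$. For such $x$ the step ``on $E_x$ the process does not leave $D^\pm(\e^\gamma)$ before $T_1$'' is unavailable: Lemma~\ref{connect path and noise 2}(xi) is stated only for initial data in $\ti D^\pm(\e^\gamma)$, and the geometric reason is that $\ti D^\pm(\e^\gamma)+B_{\e^{2\gamma}}(0)\subset D^\pm(\e^\gamma)$ whereas $D^\pm(\e^\gamma)$ itself carries no $\e^{2\gamma}$-buffer inside $D^\pm(\e^\gamma)$. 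Hence for $x\in D^\pm(\e^\gamma)\setminus\ti D^\pm(\e^\gamma)$ the residual event $\{\tau_x<T_1\}$ is \emph{not} contained in $E_x^c$, and your third term cannot be controlled by Corollary~\ref{T1 Event E^c}. Enlarging the domain of $H(\e)$ does not dissolve the $C_x$ difficulty you flagged; it merely shifts it from the post-jump bound to the pre-jump residual.

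The paper avoids this by the explicit series over the large-jump times $T_k$ (equation~\eqref{central sum}) and by treating the inter-jump exits $\{\tau_x\in(T_{k-1},T_k)\}$ separately (Claims~3 and~4). The extra ingredient is Lemma~\ref{connect path and noise 2}(xii): an exit strictly between two large jumps, after the event $A_x$, forces either the previous jump to land in the thin tube $D^*_0(\e^\gamma)$ near the separatrix, or one of the already-controlled rare events. The probability $K_9=\PP(\e W_1\in D^*_0(\e^\gamma))$ is then of order $C\la(\e)/\beta_\e$ by Lemma~\ref{asymptotics of large jump events}(IV), and this is precisely where hypothesis~(H.2) enters. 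Your recursive scheme can be repaired along the same lines---split $A_x=A_x^-\cup C_x$, close the recursion on $\sup_{y\in\ti D^\pm(\e^\gamma)}F(y)$, and absorb the $C_x$ contribution via Lemma~\ref{connect path and noise}(iii) and Lemma~\ref{asymptotics of large jump events}(IV)---but as written the recursion does not close.
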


\begin{proof} By (H.2) $\Gamma>0$ can be chosen large enough to
fulfill the hypotheses of Proposition \ref{small deviations from the
deterministic system}. Let $C>0$ be given. We drop the superscript $\pm$.
Since the jumps of the noise process $L$ exceed any fixed barrier
$\PP$-a.s., $\tau_x (\e)$ is $\PP$-a.s. finite. Therefore we may rewrite
the Laplace transform of $\tau_x(\e)$ for $\e>0$, giving
\begin{multline}
\EE\left[\sup_{x\in \ti D(\e^\gamma)} e^{-\theta \la(\e) \tau_x (\e)}\right] = \sum_{k=1}^\infty \bigg(\EE \bigg[e^{-\theta \la(\e) T_k} \sup_{x\in \ti
D(\e^\gamma)}\ind\{\tau_x (\e) = T_k \}\bigg]\label{central sum}\\
+\EE\bigg[\sup_{x\in \ti D(\e^\gamma)} e^{-\theta \la(\e) \tau_x(\e)}
\ind\{\tau_x (\e) \in (T_{k-1}, T_k) \}\bigg]\bigg) = I_1 + I_2.
\end{multline}

\noindent Using the strong Markov property, the independence and stationarity of the increments
of the large jumps $W_i$ we obtain for $k\gqq 1$
\begin{multline*}
\EE\left[ e^{-\theta \la(\e) T_k} \sup_{x\in \ti D(\e^\gamma)}\ind\{\tau_x(\e) = T_k \}\right]\\
\lqq \left(\EE \left[e^{-\theta \la(\e) T_1} \sup_{y\in
D(\e^\gamma)} \ind\left(A_y\right)\right]\right)^{k-1} \EE
\left[e^{-\theta \la(\e) T_1}\sup_{y\in D(\e^\gamma)}
\ind\left(B_y\right)\right].
\end{multline*}

\noindent In the subsequent Claims 1-4 we estimate the preceding factors
with the help of Lemma~\ref{connect path and noise 2}.
\paragraph{Claim 1: } There exists $\e_0>0$ such that for all $0 < \e \lqq \e_0$
\begin{equation*}
\EE_x\left[e^{-\theta \la(\e) T_1} \sup_{y\in D(\e^\gamma)}
\ind(A_y) \right]\lqq \frac{\beta_\e}{\beta_\e +\theta \la(\e)}
\left(1- \frac{\la(\e)}{\beta_\e}(1-C/5)\right).
\end{equation*}

\noindent In fact: in the inequality of Lemma \ref{connect path and noise
2} $ix)$ we can pass to the supremum in $y\in D(\e^\gamma)$, and integrate
to obtain, using the independence of $(W_i)_{i \in \NN}$ and $(T_i)_{i\in
\NN}$
\begin{multline*}
\EE \left[e^{-\theta \la(\e) T_1} \sup_{y\in D(\e^{\gamma})} \ind( A_y)\right] \\
\lqq \EE\left[e^{-\theta \la(\e) T_1} \ind\{T_1 < T_{rec} + \kappa\gamma |\ln \e|\}\right] \PP\left(\e \|W_1\| > (1/2) \e^{2\gamma}\right) \\
+\EE \left[e^{-\theta \la(\e) T_1}\right] \PP\left(\e W_1 \in D_0 \right) + \EE\left[e^{-\theta \la(\e) T_1}\sup_{y\in D(\e^{\gamma})}\ind(E_y^c)\right] \\
=: K_1 K_2 + K_3 K_4 + K_5.
\end{multline*}

\noindent The terms $K_1$, $K_3$ and $K_4$ can be calculated explicitly,
for $K_2$ we apply Lemma~\ref{asymptotics of large jump events} $II)$. For
$K_5$ we use Corollary~\ref{T1 Event E^c} and Lemma~\ref{asymptotics of
large jump events} $I)$ ensuring that there is $\e_0$ so that we have for
$0<\e\lqq \e_0$
\begin{equation}\label{K5}
K_5 \lqq C/10\,\frac{\beta_\e}{\beta_\e + \theta \la(\e)}
\frac{\la(\e)}{\beta_\e}.
\end{equation}

\paragraph{Claim 2: } There is $\e_0>0$ such that for all $0<\e \lqq \e_0$
\begin{equation*}
\EE \left[e^{-\theta \la(\e) T_1}\sup_{y\in D(\e^\gamma)} \ind\left(B(y)\right)\right] \lqq  (1+C)\frac{\beta_\e}{\beta_\e + \theta \la(\e)} \frac{\la(\e)}{\beta_\e}.
\end{equation*}

\noindent Indeed, in a similar manner and with the help of Lemma
\ref{connect path and noise 2} $x)$ and Lemma \ref{asymptotics of large
jump events} $III)$ we obtain that there is $\e_0>0$ such that for all
$0<\e \lqq \e_0$
\begin{equation*}
\EE \left[e^{-\theta \la(\e) T_1}\sup_{y\in D(\e^\gamma)} \ind\left(B(y)\right)\right] \lqq  (1+C)\frac{\beta_\e}{\beta_\e + \theta \la(\e)} \frac{\la(\e)}{\beta_\e}.
\end{equation*}

\noindent In order to treat the summands of the second sum of (\ref{central
sum}) we have to distinguish the cases $\theta \gqq 0$ and $\theta \in (-1,
0)$, as well as $k\ =1$ and $k\gqq 2$. Let us first discuss the case
$\theta\gqq 0.$

\paragraph{Claim 3: } There is $\e_0>0$ such that for all $0<\e \lqq \e_0$
\begin{equation*}
\EE\left[ \sup_{x\in\ti D(\e^\gamma)} e^{-\theta \la(\e) \tau_x(\e)}
\ind\{\tau_x(\e) \in (0, T_1) \}\right] \lqq
 C/5\,\left(\frac{\beta_\e}{\beta_\e + \theta \la(\e)}\right)
\frac{\la(\e)}{\beta_\e}.
\end{equation*}

This statement is proved by means of Lemma \ref{connect path and noise 2}
$xi)$ and Corollary \ref{Event E^c}.

\paragraph{Claim 4: } There exists $\e_0>0$ such that for any $k\gqq 2$

\begin{multline*}
\EE\left[\sup_{x\in \ti D(\e^\gamma)} e^{-\theta \la(\e) \tau_x(\e)} \ind\{\tau_x(\e) \in (T_{k-1}, T_k) \}\right]\\
\lqq \left(\frac{\beta_\e}{\beta_\e +\theta \la(\e)} \left(1-
\frac{\la(\e)}{\beta_\e}\left(1-C/5\right)\right)\right)^{k-2} C/5
\frac{\beta_\e}{\beta_\e+\theta \la(\e)} \frac{\la(\e)}{\beta_\e}.
\end{multline*}

\noindent To show this, we use the strong Markov property and Lemma
\ref{connect path and noise 2} $xii)$, as in the estimate for the first
summand to get for $k\gqq 2$ and $\theta \gqq 0$
\begin{multline}
\EE\left[\sup_{x\in \ti D(\e^\gamma)} e^{-\theta \la(\e) T_{k-1}} \ind\{\tau_x(\e)\in (T_{k-1}, T_{k})\}\right] \\
\lqq \left(\EE\left[ e^{-\theta\la(\e) T_1} \sup_{y\in D(\e^\gamma)}
\ind(A_y)\right]\right)^{k-2} \left(K_3 K_9 +K_1 + 2 K_5\right).
\end{multline}

\noindent Lemma~\ref{connect path and noise 2} $xii)$ and
Lemma~\ref{asymptotics of large jump events} $IV)$ provide the existence of
$\e_0>0$ such that for $0 < \e \lqq \e_0$
\begin{equation*}
K_9 = \PP\left(\e W_1 \in D^*_0(\e^\gamma)\right) \lqq C/20 \frac{\la(\e)}{\beta_\e}.
\end{equation*}
\noindent It remains to discuss the case $\theta \in (-1,0)$ in a similar
way. This is detailed in \cite{DHI10}.

Combining Claims 1-4 we finally find an $\e_0>0$ such that for
(\ref{central sum}) and all $0<\e \lqq \e_0$

\begin{multline*}
\EE\left[\sup_{x\in \ti D(\e^\gamma)} e^{-\theta \la(\e) \tau_x (\e)}\right]\\
\lqq \left(1+(2/5)C\right) \frac{\la(\e)}{\beta_\e} \frac{\beta_\e}{\beta_\e + \theta \la(\e)} \sum_{k=0}^\infty \left(\frac{\beta_\e}{\beta_\e + \theta \la(\e)} \left(1-\frac{\la(\e)}{\beta_\e} (1-C/5)\right)\right)^{k}\\
\lqq\frac{1+C}{\theta +(1-C)}.
\end{multline*}

\noindent The series converges if and only if $C< \theta +1$.
\end{proof}

\begin{props}[The lower estimate\label{the lower estimate}]
Assume that Hypotheses (H.1) and (H.2) are satisfied. Then for
all $\theta > -1$ and $C\in (0,1+\theta)$
there is $\e_0=\e_0(\theta)>0$ such that for all $0<\e \lqq \e_0$
\[
\EE\left[\inf_{x\in \ti D^\pm(\e^\gamma)} \exp\left(-\theta \la^\pm(\e) \tau_x^\pm(\e)\right)\right]
\gqq \frac{1+C}{1+\theta -C}.
\]
\end{props}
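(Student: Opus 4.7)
The plan is to mirror the proof of Proposition \ref{the upper estimate}, replacing suprema by infima and the survival event $A_x$ by its reduced analogue $A_x^-$. I drop the $\pm$ superscript and first treat $\theta\gqq 0$, where $e^{-\theta\la(\e) s}$ is non-increasing in $s$; the remaining range $\theta\in(-1,0)$ is handled analogously as indicated in \cite{DHI10}. Pointwise,
\[
e^{-\theta\la(\e)\tau_x(\e)}\gqq \sum_{k\gqq 1}e^{-\theta\la(\e) T_k}\ind\{\tau_x(\e)=T_k\},
\]
so it suffices to lower-bound each summand $\EE\bigl[e^{-\theta\la(\e) T_k}\inf_{x\in\ti D(\e^\gamma)}\ind\{\tau_x(\e)=T_k\}\bigr]$.

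The key observation for iterating the strong Markov property under an infimum is that $A_x^-$ sends the process at time $T_1$ back into the reduced domain $\ti D(\e^\gamma)$. Defining recursively $F_1^x:=B_x$ and $F_k^x:=A_x^-\cap(F_{k-1}^{X^\e(T_1;x)}\circ\theta_{T_1})$ for $k\gqq 2$, we have $F_k^x\subset\{\tau_x(\e)=T_k\}$ and, pointwise,
\[
\inf_{x\in\ti D(\e^\gamma)}\ind(F_k^x)\gqq \inf_{x\in\ti D(\e^\gamma)}\ind(A_x^-)\cdot\Bigl(\inf_{y\in\ti D(\e^\gamma)}\ind(F_{k-1}^y)\Bigr)\circ\theta_{T_1}.
\]
Iterating, together with the strong Markov property and the independence of the pairs $(T_k-T_{k-1},W_k)$, yields
\[
\EE\bigl[e^{-\theta\la(\e) T_k}\inf_x\ind(F_k^x)\bigr]\gqq \bigl(\EE[e^{-\theta\la(\e) T_1}\inf_x\ind(A_x^-)]\bigr)^{k-1}\EE\bigl[e^{-\theta\la(\e) T_1}\inf_x\ind(B_x)\bigr].
\]

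I would then substitute the reverse-sense bounds of Lemma \ref{connect path and noise 2} $xiii), xiv)$,
\[
\ind(A_x^-)\gqq \ind\{\e W_1\in D_0(\e^\gamma,\e^{2\gamma},\e^{2\gamma})\}-\ind\{T_1<T_{rec}+\kappa\gamma|\ln\e|\}-2\sup_y\ind(E_y^c),
\]
\[
\ind(B_x)\gqq \ind\{\e W_1\notin D_0\}\bigl(1-\ind\{T_1<T_{rec}+\kappa\gamma|\ln\e|\}\bigr)-\sup_y\ind(E_y^c),
\]
and use the independence of $T_1\sim EXP(\beta_\e)$ and $W_1$. By Lemma \ref{asymptotics of large jump events} $V), I)$ the leading contributions are bounded below by $\frac{\beta_\e}{\beta_\e+\theta\la(\e)}\bigl(1-(1+C')\la(\e)/\beta_\e\bigr)$ and $\frac{\beta_\e}{\beta_\e+\theta\la(\e)}(1-C')\la(\e)/\beta_\e$ respectively. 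The residual terms coming from $\{T_1<T_{rec}+\kappa\gamma|\ln\e|\}$ are of order $\beta_\e|\ln\e|=o(\la(\e)/\beta_\e)$, and the contribution of $\sup_y\ind(E_y^c)$ is absorbed into a fraction of $\la(\e)/\beta_\e$ by Corollary \ref{T1 Event E^c}.

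A direct computation then sums the geometric series: with $q(\e):=\frac{\beta_\e}{\beta_\e+\theta\la(\e)}\bigl(1-(1+C')\la(\e)/\beta_\e\bigr)$ one finds $1-q(\e)=\frac{\la(\e)(1+\theta+C'+o(1))}{\beta_\e+\theta\la(\e)}$, so the total sum is asymptotically $(1-C'+o(1))/(1+\theta+C'+o(1))$, and the stated bound follows by choosing $C'$ small in terms of $C$. The main obstacle is the bookkeeping of error terms across the iteration: each factor carries a $\sup_y\ind(E_y^c)$ contribution, and preventing these from overwhelming the leading order $\la(\e)/\beta_\e\approx\e^{\alpha(1-\rho)}$ hinges precisely on the exponent $\vartheta>\alpha(1-\rho)$ from Proposition \ref{small deviations from the deterministic system} and on the calibration of $\Theta,\rho,\gamma$ fixed by Hypothesis (H.2).
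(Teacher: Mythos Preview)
Your proof is correct and follows essentially the same route as the paper: drop the continuous-exit contributions, iterate the strong Markov property via the event $A_x^-$ (which lands back in $\ti D(\e^\gamma)$), apply Lemma~\ref{connect path and noise 2} $xiii), xiv)$ together with Lemma~\ref{asymptotics of large jump events} to bound the two factors, and sum the resulting geometric series. Your explicit recursive construction of $F_k^x$ makes the iteration more transparent than the paper's terse version, and your final asymptotic $(1-C')/(1+\theta+C')$ is exactly what the paper's own computation yields.
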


\begin{proof} Again we omit the superscript $\pm$ and fix $\Gamma>0$ large enough due to (H.2).
Omitting the term $I_2$ in equation (\ref{central sum}), we obtain the
estimate

\begin{multline}\label{reduced central sum}
\EE\left[\inf_{x\in \ti D(\e^\gamma)} e^{-\theta \la(\e) \tau_x(\e)}\right]\\
\gqq  \sum_{k=1}^\infty \left(\EE \left[e^{-\theta \la(\e) T_1} \inf_{y\in \ti D(\e^\gamma)}
\ind(A^-_y)\right]\right)^{k-1} \EE \left[e^{-\theta \la(\e) T_1}\inf_{y\in \ti D(\e^\gamma)} \ind(B_y)\right].
\end{multline}

\noindent We treat the terms appearing in (\ref{reduced central sum}) in a
similar way as for the upper estimate.
\paragraph{Claim 1: } There is $\e_0>0$ such that for all $0 < \e \lqq \e_0$
\begin{equation*}
\EE \left[e^{-\theta \la(\e) T_1} \inf_{x\in \ti D(\e^\gamma)}\ind(A^-_x)\right] \gqq \frac{\beta_\e}{\beta_\e +\theta \la(\e)}\left(1 - (1+C)\frac{\la(\e)}{\beta_\e}\right).
\end{equation*}
\noindent To prove this, we apply Lemma \ref{connect path and noise}
$xiii)$, take the infimum over $y\in \ti D(\e^\gamma)$ and integrate to get
\begin{multline*}
\EE\left[e^{-\theta \la(\e) T_1} \inf_{y\in \ti D(\e^\gamma)} \ind(A^-_y)\right] \\
= K_3 \left(1-\PP(W_1 \in (1/\e) D^c_0(\e^\gamma, \e^{2\gamma}, \e^{2\gamma}))\right) - K_1 - 2 K_5,\\
\end{multline*}
where $K_1, K_3, K_5$ have the same meaning as in the proof of Proposition
\ref{the upper estimate} and are treated identically.

\noindent By Lemma \ref{asymptotics of large jump events} $V)$ there exists
$\e_0>0$ such that for $0< \e \lqq \e_0$
\[
\PP(\e W_1 \in D_0^c(\e^\gamma, \e^{2\gamma}, \e^{2\gamma})) \lqq (1+C/5) \frac{\la(\e)}{\beta_\e}.
\]

\paragraph{Claim 2: } There is $\e_0>0$ such that for $0<\e \lqq \e_0$
\begin{equation*}
\EE\left[e^{-\theta \la(\e) T_1} \inf_{y\in \ti D_0(\e^{\gamma})}\ind(B_y)\right]
\gqq  \frac{\beta_\e}{\theta \la(\e) +\beta_\e} \left((1-C)\frac{\la(\e)}{\beta_\e}\right).
\end{equation*}

\noindent Here we exploit Lemma \ref{connect path and noise 2} $xiv)$.
Finally combining Claim~1 and Claim~2 we obtain
\begin{multline*}
\EE\left[\inf_{x\in \ti D(\e^\gamma)} e^{-\theta \la(\e) \tau_x(\e)}\right] \geq \\
\gqq \sum_{k=1}^\infty \left(\frac{\beta_\e}{\beta_\e +\theta \la(\e)}\left(1 - (1+C)\frac{\la(\e)}{\beta_\e}\right)\right)^{k-1} \frac{\beta_\e}{\theta \la(\e) +\beta_\e} \left((1-C)\frac{\la(\e)}{\beta_\e}\right) \\
= \frac{\la(\e) (1-C)}{\theta \la(\e) - (1+C) \la(\e)} = \frac{1-C}{\theta +1 +C}.
\end{multline*}
The series converges if and only if $-(1+C) < \theta$. \end{proof}


\end{document}